\def\setliststart#1{\setcounter{\@listctr}{#1}%
  \addtocounter{\@listctr}{-1}}
 \newtheorem{The}{Theorem}[section]
 \newtheorem{Lem}[The]{Lemma}
 \newtheorem{Pro}[The]{Proposition}
 \theoremstyle{definition}
 \numberwithin{equation}{section}
\newcommand{\R}{\mathbb{R}}
\newcommand{\N}{\mathbb{N}}
\newcommand{\SING}{\mbox{\rm Sing}\,(u)}
\newcommand{\argmin}{\operatorname*{arg\ min}}
\newcommand{\epf}{\mbox{\rm epf}\,}
\newcommand{\gra}{\mbox{\rm graph}\,}
\newcommand{\ext}{\mbox{\rm ext}\,}
\title[Local strict singular characteristics II]{Local strict singular characteristics II: \\existence for stationary equation on $\R^2$}
\author{Wei Cheng \and Jiahui Hong}
\address{Department of Mathematics, Nanjing University, Nanjing 210093, China}
\email{chengwei@nju.edu.cn}
\address{Department of Mathematics, Nanjing University, Nanjing 210093, China}
\email{hjh9413@163.com}
\subjclass[2010]{35F21, 49L25, 37J50}
\keywords{singular characteristics, Hamilton-Jacobi equation, viscosity solution}
\begin{document}
\maketitle


\begin{abstract}
The notion of strict singular characteristics is important in the wellposedness issue of singular dynamics on the cut locus of the viscosity solutions. We provide an intuitive and rigorous proof of the existence of the strict singular characteristics of Hamilton-Jacobi equation $H(x,Du(x),u(x))=0$ in two dimensional case. We also proved if $\mathbf{x}$ is a strict singular characteristic, then we really have the right-differentiability of $\mathbf{x}$ and the right-continuity of $\dot{\mathbf{x}}^+(t)$ for every $t$. Such a strict singular characteristic must give a selection $p(t)\in D^+u(\mathbf{x}(t))$ such that $p(t)=\arg\min_{p\in D^+u(\mathbf{x}(t))}H(\mathbf{x}(t),p,u(\mathbf{x}(t)))$.
\end{abstract}

\section{Introduction}

The singularities of the viscosity solutions of Hamilton-Jacobi equation
\begin{equation}\label{eq:intro_HJ}\tag{HJ$_s$}
	H(x,Du(x),u(x))=0,\qquad x\in\R^n,
\end{equation}
characterize the focus and crossing of associated characteristics. The occurrence of singularities of the viscosity solutions of \eqref{eq:intro_HJ} provide useful information of the complicated dynamical behavior of the underlying Hamiltonian systems as well as the invertibility of the associated Lax-Oleinik evolution from the level of functional analysis. This is closely related to many problems from mathematical physics. The readers can refer to \cite{Cannarsa_Cheng2021a} to a survey of this topic. 

This paper is devoted to some finer analysis of the propagation of singularities involving the notion of singular characteristics. If the Hamiltonian $H(x,p,u)$ is convex in $p$-variable, the semiconcavity is the so called \emph{maximal regularity} for the viscosity solutions of \eqref{eq:intro_HJ}. Throughout this paper, we suppose $H(x,p,u):\R^n\times\R^n\times\R\to\R$ is of class $C^1$ and satisfies
\begin{enumerate}[\rm (H1)]
	\item $H(x,\cdot,u)$ is strictly convex for all $(x,u)\in\R^n\times\R$.
	\item $\Omega$ is a connected open subset of $\R^n$, and $u:\Omega\to\R$ is a locally semiconcave (viscosity) solution of \eqref{eq:intro_HJ}.
\end{enumerate}

The notion of singular characteristics for Hamilton-Jacobi equations comes from early work by Albano and Cannarsa (\cite{Albano_Cannarsa2002}). The name \emph{generalized characteristics} was coined in the paper \cite{Albano_Cannarsa2002} after similar notion introduced by Dafermos \cite{Dafermos1977} in hyberbolic conservation law. Recall that a Lipschitz  arc $\mathbf{x}:[0,\tau]\to\R^n$ is called a \emph{generalized characteristic} starting from $x$ for the pair $(H,u)$ if it satisfies the following:
\begin{align}\label{intro:gc}
	\begin{cases}
		\dot{\mathbf{x}}(s)\in\mathrm{co}\,H_p\big(\mathbf{x}(s),D^+u(\mathbf{x}(s)),u(\mathbf{x}(s))\big)&\quad \text{a.e.}\;s\in[0,\tau],\\
		\dot{\mathbf{x}}(0)=x.&
	\end{cases}
\end{align}
The local structure of  singular (generalized) characteristics was further investigated  by Cannarsa and Yu in \cite{Cannarsa_Yu2009}, where \emph{singular characteristics}  were proved more regular near the starting point than the arcs constructed in  \cite{Albano_Cannarsa2002}. For any weak KAM solution $u$ of \eqref{eq:intro_HJ}, the class of  \emph{intrinsic singular characteristics} was constructed in \cite{Cannarsa_Cheng3} by Cannarsa and the first author of this paper, using the positive type Lax-Oleinik semi-group.

However, due to convex hull in the right side of \eqref{intro:gc}, there is no uniqueness result for the singular characteristics except for the well known systems of generalized gradients (\cite{ACNS2013}). Recent significant progress in the attempt to develop a more restrictive notion of singular characteristics is due to Khanin and Sobolevski (\cite{Khanin_Sobolevski2016}). In this paper, we will call such curves \emph{strict singular characteristic} but in the literature they are also refereed to as  \emph{broken characteristics}, see \cite{Stromberg2013,Stromberg_Ahmadzadeh2014}. We now proceed to recall their definition: given a semiconcave solution $u$ of  \eqref{eq:intro_HJ},  a Lipschitz singular curve $\mathbf{x}:[0,T]\to\Omega$ is called a strict singular characteristic from $x\in\SING$ if there exists a measurable selection $p(t)\in D^+u(\mathbf{x}(t))$ such that
\begin{equation}\label{eq:intro_sgc}
	\begin{split}
		\begin{cases}
		\dot{\mathbf{x}}(t)=H_p(\mathbf{x}(t),p(t),u(\mathbf{x}(t)))& a.e.\ t\in[0,T],\\
		\mathbf{x}(0)=x.&
	\end{cases}
	\end{split}
\end{equation}
As proved in \cite{Khanin_Sobolevski2016}, there exists a solution of \eqref{eq:intro_sgc} (for a time dependent Hamiltonian $H(t,x,p)$). Moreover, some additional regularity properties of such curves were established, including
\begin{itemize}[--]
	\item the right-differentiability of $\mathbf{x}$ for every $t$,
	\item the right-continuity of $\dot{\mathbf{x}}$,
	\item the fact that $p(\cdot):[0,T]\to\R^n$ satisfies $p(t)=\arg\min_{p\in D^+_xu(t,\mathbf{x}(t))}H(t,\mathbf{x}(t),p)$.
\end{itemize}
 
First general result on the uniqueness of the strict singular characteristics was obtained in \cite{Cannarsa_Cheng2021b} for the classical Tonelli-type Hamiltonian $H(x,p)$ and semiconcave solution of \eqref{eq:intro_HJ} in two dimensional case. More precisely, if the initial point $x\in\SING$ is not a critical point, i.e., $0\not\in H_p(x,D^+u(x))$, then 
\begin{itemize}[--]
	\item any two singular characteristics are identical up to a bi-Lipschitz homeomorphism,
	\item there exists a unique strict singular characteristic from $x$.
\end{itemize}

However, known proofs of the existence of strict singular characteristics still look rather mysterious (see \cite{Khanin_Sobolevski2016} and the appendix of \cite{Cannarsa_Cheng2021b}). In our previous paper \cite{Cheng_Hong2021}, we considered the problem in a more intuitive way. For a Cauchy problem
\begin{equation}\label{eq:HJe}\tag{HJ$_e$}
	\left\{
	 \begin{split}
	 	D_tu(t,x)+H(t,x,D_xu(t,x),u(t,x))=&\,0\\
	 	u(0,x)=&\,u_0(x)
	 \end{split}
	 \right.\quad x\in \R^n, t>0,
\end{equation}
with $u_0\in C^2$, we proved the local existence of smooth strict singular characteristics from non-conjugate singular point of the solution $u(t,x)$ of \eqref{eq:HJe}, when $n=1$ or $n>1$ with some degenerate initial condition. Recall that any viscosity solution of \eqref{eq:intro_HJ} or \eqref{eq:HJe} can be regarded as the value function of a Herglotz-type variational problem introduced in \cite{CCWY2019} and \cite{CCJWY2020} (see also \cite{Wang_Wang_Yan2019_1} for a representation formula by an implicit variational principle). So, one can define the conjugate point when the initial data are of class $C^2$ (see \cite{Cheng_Hong2021}), which does not work for general initial data. In this paper, we will deal with the general case mainly on $\R^2$ for the stationary equation \eqref{eq:intro_HJ}.

Since condition (H1), given $x\in\Omega$, there exists a unique $p^{\#}(x)\in D^{+}u(x)$ such that
\begin{align*}
	H(x,p^{\#}(x),u(x))\leqslant H(x,p,u(x)),\qquad \forall p\in D^{+}u(x).
\end{align*}
We denote $v^{\#}(x)=H_p(x,p^{\#}(x),u(x))$. 

Now, we can formulate our main result.

\begin{The}\label{thm:existence}
Suppose $n=2$, then for any $x_0\in\Sigma(u)$, there exists $t_0>0$ and a Lipschitz curve $\gamma:[0,t_0]\to\R^2$, $\gamma(0)=x_0$ such that
\begin{enumerate}[\rm (1)]
	\item $\dot{\gamma}^{+}(t)=v^{\#}(\gamma(t)),\ \forall t\in[0,t_0)$.
	\item $\lim_{s\to t^+}v^{\#}(\gamma(s))=v^{\#}(\gamma(t)),\ \forall t\in[0,t_0)$.
	\item $\gamma(t)\in\Sigma(u),\ \forall t\in[0,t_0]$.
\end{enumerate}
\end{The}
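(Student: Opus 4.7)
The plan is to construct $\gamma$ as the uniform limit of an explicit Euler-type scheme driven by the singular field $v^{\#}$, and then to upgrade the conclusions from an a.e. ODE to pointwise form using upper semicontinuity of the superdifferential, strict convexity of $H$, and the low-dimensional structure of $\Sigma(u)$. Concretely, I would fix a small $t_0>0$, set $h=t_0/N$, start from $\gamma_N(0)=x_0$, and as long as $\gamma_N(kh)\in\Sigma(u)$ define
\[
\gamma_N((k+1)h)=\gamma_N(kh)+h\,v^{\#}(\gamma_N(kh)),
\]
interpolating linearly between nodes. The local Lipschitz bound on $u$ together with (H1) gives $|v^{\#}(x)|\le C$ on a neighbourhood of $x_0$, so $\{\gamma_N\}$ is equi-Lipschitz and Arzel\`a--Ascoli produces a subsequential uniform limit $\gamma:[0,t_0]\to\R^2$.

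Next comes the invariance step: one must verify that each iterate $\gamma_N(kh)$ lies in $\Sigma(u)$, and that this persists in the limit. Here the two-dimensional geometry is decisive. At a non-critical singular point $x$ the superdifferential $D^{+}u(x)$ is a convex set of positive dimension and, by (H1), the minimiser $p^{\#}(x)$ is an exposed point whose outer conormal direction is $v^{\#}(x)$. Using the Hamilton--Jacobi identity $H(x,p,u(x))=0$ on the set of reachable gradients together with a first-order expansion of $u$ along the segment $x+s\,v^{\#}(x)$, one shows that the superdifferential at the displaced point still carries at least two distinct exposed gradients, so the iterate remains in $\Sigma(u)$. Passing to the limit and using closedness of $\Sigma(u)$ near $x_0$ then yields conclusion (3).

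For (2), recall that the graph of $D^{+}u$ is closed (semiconcavity), so given $s_n\downarrow t$ any cluster point of $\{p^{\#}(\gamma(s_n))\}$ lies in $D^{+}u(\gamma(t))$ and minimises $H(\gamma(t),\cdot,u(\gamma(t)))$ on this set by continuity of $H$; by (H1) the minimiser is unique and equals $p^{\#}(\gamma(t))$, whence $v^{\#}(\gamma(s_n))\to v^{\#}(\gamma(t))$. For (1), the discrete scheme gives $(\gamma_N(t+h)-\gamma_N(t))/h=v^{\#}(\gamma_N(t))$ at nodal $t$; passing to the limit one first obtains $\dot\gamma(t)=v^{\#}(\gamma(t))$ for a.e.\ $t$, and the right-continuity established in (2) then promotes this to the pointwise right-derivative identity.

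The principal obstacle is the invariance step: showing that the Euler iterate does not leak out of $\Sigma(u)$ requires tracking the exposed face of $D^{+}u$ under an $O(h)$ displacement in the direction $v^{\#}$. This is exactly where the hypothesis $n=2$ is essential, because the singular set is then a locally one-dimensional Lipschitz object and transversality of $v^{\#}$ to the regular stratum can be controlled using the two-point structure of a generic exposed face; in higher dimensions the exposed face could collapse under small perturbations. A secondary technical point, namely well-definedness and uniqueness of $p^{\#}$ along the limit curve, is immediate from (H1).
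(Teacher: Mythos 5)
Your invariance step does not work, and this is not a minor technical wrinkle but the heart of the problem. The singular set $\Sigma(u)$ of a semiconcave function on $\R^2$ is a set of Hausdorff dimension at most $1$: locally a countable union of Lipschitz arcs, a set of Lebesgue measure zero. Taking a finite Euler step
\[
\gamma_N((k+1)h)=\gamma_N(kh)+h\,v^{\#}(\gamma_N(kh))
\]
from a point $\gamma_N(kh)\in\Sigma(u)$ will, in general, land you strictly off the singular set: the direction $v^{\#}(\gamma_N(kh))$ is only the \emph{tangent} direction to the singular arc at that point, and any curvature of the arc kicks the iterate into the differentiability region at order $O(h^2)$. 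Once a node is regular, $D^{+}u$ is a singleton there and your iteration is simply following a regular characteristic, which need never return to $\Sigma(u)$. The paragraph in which you claim that ``a first-order expansion of $u$ along the segment'' keeps two distinct exposed gradients in play is precisely the assertion that has to be proved, and no mechanism is offered. The paper avoids this obstacle entirely by not moving in $x$-space at all: after normalising $u$ to be uniformly concave, the map $\varphi:(x,p)\mapsto p$ on the graph of $D^{+}u$ is shown to be a bi-Lipschitz homeomorphism onto an \emph{open} set $\mathrm{Im}(\varphi)\subset\R^2$, and the set $A^{\#}=\{p:\ p\in D^{+}u(x(p))\setminus D^{*}u(x(p))\}$ (the $\varphi$-image of the singular locus) is itself open (Lemma \ref{lem:prop of A}). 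A straight line $\eta(t)=p^{\#}(x_0)-tv^{\#}(x_0)/|v^{\#}(x_0)|$ launched from $p^{\#}(x_0)\in A^{\#}$ therefore stays in $A^{\#}$ for a short time, so $\gamma_1(t)=x(\eta(t))$ stays in $\Sigma(u)$ \emph{by construction}. Openness of the dual picture replaces the impossible ``stay on a thin set'' problem with a trivial one.

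A second, smaller gap is in your argument for (2). Upper semicontinuity of $D^{+}u$ only gives that a cluster point $p$ of $p^{\#}(\gamma(s_n))$ lies in $D^{+}u(\gamma(t))$; it does \emph{not} follow that $p$ minimises $H(\gamma(t),\cdot,u(\gamma(t)))$ over $D^{+}u(\gamma(t))$. Each $p^{\#}(\gamma(s_n))$ minimises $H$ only over the potentially much smaller set $D^{+}u(\gamma(s_n))$, and upper semicontinuity is the statement that these sets can suddenly shrink, not that they fill out the limit set. To close this you need a lower bound, namely that $D^{*}u(\gamma(s_n))$ accumulates on \emph{both} endpoints $p_1,p_2$ of the exposed face $\epf(D^{+}u(x_0),v^{\#}(x_0))$, which is Step 3 of the proof of Lemma \ref{lem:R2-2}(2) and uses the two-dimensional structure through Lemma \ref{lem:R2-1} in a way your transversality heuristic does not reproduce. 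So the two places where you invoke $n=2$ informally are exactly the two places where a quantitative argument is required, and without the dual parameterisation both remain open.
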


The paper is organized as follows. In section 2, we introduce necessary material on Hamilton-Jacobi equations, viscosity solutions and semiconcavity. In section 3, we proved the main result of this paper. There is an appendix which is composed of some technical materials on the reparameterization of Lipschitz curves. 

\medskip

\noindent\textbf{Acknowledgements.} Wei Cheng is partly supported by National Natural Science Foundation of China (Grant No. 11871267 and 11790272). 

\section{Semiconcave function and viscosity solution}

\subsection{Semiconcave function}

Let $\Omega\subset\R^n$ be a convex set. We recall that a function $u:\Omega\rightarrow\R$ is said to be {\em semiconcave} (with linear modulus) if there exists a constant $C>0$ such that
\begin{equation}\label{eq:SCC}
\lambda u(x)+(1-\lambda)u(y)-u(\lambda x+(1-\lambda)y)\leqslant\frac C2\lambda(1-\lambda)|x-y|^2
\end{equation}
for any $x,y\in\Omega$ and $\lambda\in[0,1]$. Any constant $C$ that satisfies the above inequality  is called a {\em semiconcavity constant} for $u$ in $\Omega$. A function $u:\Omega\rightarrow\R$ is said to be {\em locally semiconcave} if for each $x\in\Omega$ there exists an open ball $B(x,r)\subset\Omega$ such that $u$ is a semiconcave function on $B(x,r)$.

For any continuous function $u:\Omega\subset\R^n\to\R$ and for any $x\in\Omega$, the closed convex sets
\begin{align*}
D^-u(x)&=\left\{p\in\R^n:\liminf_{y\to x}\frac{u(y)-u(x)-\langle p,y-x\rangle}{|y-x|}\geqslant 0\right\},\\
D^+u(x)&=\left\{p\in\R^n:\limsup_{y\to x}\frac{u(y)-u(x)-\langle p,y-x\rangle}{|y-x|}\leqslant 0\right\}.
\end{align*}
are called the {\em subdifferential} and {\em superdifferential} of $u$ at $x$, respectively. If $u:\Omega\to\R$ is locally Lipschitz, a vector $p\in\R^n$ is said to be a {\em reachable} (or {\em limiting}) {\em gradient} of $u$ at $x$ if there exists a sequence $\{x_n\}\subset\Omega\setminus\{x\}$, converging to $x$, such that $u$ is differentiable at $x_k$ for each $k\in\N$ and
$$
\lim_{k\to\infty}Du(x_k)=p.
$$
The set of all reachable gradients of $u$ at $x$ is denoted by $D^{\ast}u(x)$. 

\begin{Pro}[\cite{Cannarsa_Sinestrari_book}]\label{basic_facts_of_superdifferential}
Let $u:\Omega\subset\R^n\to\R$ be a semiconcave function and let $x\in\Omega$. Then the following properties hold.
\begin{enumerate}[\rm {(}a{)}]
  \item $D^+u(x)$ is a nonempty compact convex set in $\R^n$ and $D^{\ast}u(x)\subset\partial D^+u(x)$, where  $\partial D^+u(x)$ denotes the topological boundary of $D^+u(x)$.
  \item The set-valued function $x\rightsquigarrow D^+u(x)$ is upper semicontinuous.
  \item If $D^+u(x)$ is a singleton, then $u$ is differentiable at $x$. Moreover, if $D^+u(x)$ is a singleton for every point in $\Omega$, then $u\in C^1(\Omega)$.
  \item $D^+u(x)=\mathrm{co}\, D^{\ast}u(x)$.
\end{enumerate}
\end{Pro}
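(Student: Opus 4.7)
The plan is to exploit the following global characterization of the superdifferential of a semiconcave function: if $u$ is $C$-semiconcave on a convex open set $U \subset \Omega$, then for each $x \in U$,
\begin{equation*}
p \in D^+u(x) \iff u(y) \leqslant u(x) + \langle p, y - x\rangle + \frac{C}{2}|y - x|^2 \quad \text{for all } y \in U.
\end{equation*}
I would prove this characterization first as a preliminary lemma: the $\Leftarrow$ direction is immediate from the limsup definition, while $\Rightarrow$ is obtained by fixing $y$, applying \eqref{eq:SCC} along the segment from $x$ to $y$, dividing by $\lambda$ and letting $\lambda \to 0^+$, and invoking the definition of $D^+u(x)$. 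Once this one-sided global inequality is available, all four assertions follow in an essentially linear order.

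For (a), nonemptiness of $D^+u(x)$ follows because $\tilde u := u - \frac{C}{2}|\cdot|^2$ is concave on $U$ and concave functions on a convex open set have nonempty classical superdifferentials everywhere; convexity of $D^+u(x)$ is immediate from its limsup definition; compactness follows since $u$ is locally Lipschitz (a consequence of semiconcavity), so $|p|$ is bounded by the local Lipschitz constant for any $p \in D^+u(x)$, while closedness is clear from the global inequality. For $D^*u(x) \subset D^+u(x)$: if $p = \lim \nabla u(x_k)$ with $x_k \to x$, each $\nabla u(x_k)$ lies in $D^+u(x_k)$ and satisfies the global inequality at $x_k$; passing to the limit using continuity of $u$ yields $p \in D^+u(x)$. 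The boundary assertion $D^*u(x) \subset \partial D^+u(x)$ follows by combining, for any $q \in D^+u(x)$, the estimate $u(x_k) \leqslant u(x) + \langle q, x_k - x\rangle + \frac{C}{2}|x_k - x|^2$ with the reverse bound $u(x) \leqslant u(x_k) + \langle \nabla u(x_k), x - x_k\rangle + \frac{C}{2}|x_k - x|^2$; these together give $\langle \nabla u(x_k) - q, v_k\rangle \leqslant C|x_k - x|$ for $v_k = (x_k - x)/|x_k - x|$, and hence (along a subsequence with $v_k \to v$) $\langle p - q, v\rangle \leqslant 0$, exhibiting $v$ as a supporting direction at $p$.

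For (b), upper semicontinuity is immediate from the global characterization: if $x_n \to x$ and $p_n \in D^+u(x_n)$ with $p_n \to p$, pass to the limit in $u(y) \leqslant u(x_n) + \langle p_n, y - x_n\rangle + \frac{C}{2}|y-x_n|^2$. For (c), given $D^+u(x) = \{p\}$, pick $p_y \in D^+u(y)$ for $y$ near $x$; upper semicontinuity forces $p_y \to p$ as $y \to x$. Combining $u(y) \leqslant u(x) + \langle p, y - x\rangle + \frac{C}{2}|y-x|^2$ with the reverse $u(x) \leqslant u(y) + \langle p_y, x - y\rangle + \frac{C}{2}|y-x|^2$ and the fact that $\langle p_y - p, y - x\rangle = o(|y-x|)$ gives $|u(y) - u(x) - \langle p, y - x\rangle| = o(|y - x|)$, so $u$ is differentiable at $x$ with $\nabla u(x) = p$; the $C^1$ statement then follows since $\nabla u$, as the unique element of the singleton-valued $D^+u$, is continuous by (b). For (d), the inclusion $\co D^*u(x) \subset D^+u(x)$ is immediate from convexity of $D^+u(x)$ together with $D^*u(x) \subset D^+u(x)$ shown in (a); the reverse inclusion reduces, via the concave function $\tilde u$, to the classical fact that for a concave function the superdifferential coincides with the convex hull of limits of gradients at nearby differentiability points, which in turn is a consequence of Rademacher's theorem (density of differentiability points) and a Carathéodory/separation argument.

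The main technical hurdle is the identity $\partial \tilde u(x) = \co D^*\tilde u(x)$ for concave $\tilde u$ needed in (d); I would treat this as a black-box result from finite-dimensional convex analysis rather than reprove it here. Everything else is a direct application of the global characterizing inequality, which is why establishing that lemma upfront is the key organizing move: it simultaneously captures the one-sided estimate driving (c), the stability under limits driving (b), and the extremality argument driving the boundary inclusion in (a).
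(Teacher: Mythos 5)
This proposition is quoted in the paper from \cite{Cannarsa_Sinestrari_book} without any proof, so there is no internal argument to compare against; what you have written is essentially the standard textbook proof, and it is sound. Your preliminary lemma is exactly the paper's Proposition \ref{criterion-Du_semiconcave2} (also cited from the same book), and deriving (a)--(c) from it works as you describe: the monotonicity-type combination of the two one-sided inequalities does give $\langle p-q,v\rangle\leqslant 0$ for every $q\in D^+u(x)$ along a subsequence with $v_k\to v\in S^{n-1}$, which rules out $p$ being an interior point (and if $D^+u(x)$ has empty interior the boundary inclusion is trivial), and the differentiability argument in (c) is the usual squeeze. Two small points you should make explicit: in (b) the global inequality only gives closedness of the graph of $x\rightsquigarrow D^+u(x)$, and you need the local boundedness of $D^+u$ coming from the local Lipschitz estimate in (a) to upgrade this to upper semicontinuity in the $\varepsilon$--$\delta$ sense (the same remark is what lets you conclude $p_y\to p$ in (c) and continuity of $Du$ in the $C^1$ statement). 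The only genuine debt is in (d), where the inclusion $D^+u(x)\subset\co D^{\ast}u(x)$ is delegated to the classical convex-analysis identity for concave functions (Rockafellar, Theorem 25.6, applied to $\tilde u=u-\frac{C}{2}|\cdot|^2$ after checking that both $D^+$ and $D^{\ast}$ transform by the shift $Cx$); since that identity is itself the substantive content of (d), your proof is complete only modulo that external theorem, which is a reasonable trade-off here given the proposition is itself a quoted standard result, but you should cite it precisely rather than call it a black box.
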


We call $\Sigma(u)$ the \emph{singular set} of a semiconcave function $u$, that is $x\in\Sigma(u)$ if $D^+u(x)$ is not a singleton.

\begin{Pro}[\cite{Cannarsa_Sinestrari_book}]
\label{criterion-Du_semiconcave2}
Let $u:\Omega\to\R$ be a continuous function. If there exists a constant $C>0$ such that, for any $x\in\Omega$, there exists $p\in\R^n$ such that
\begin{equation}\label{criterion_for_lin_semiconcave}
u(y)\leqslant u(x)+\langle p,y-x\rangle+\frac C2|y-x|^2,\quad \forall y\in\Omega,
\end{equation}
then $u$ is semiconcave with constant $C$ and $p\in D^+u(x)$.
Conversely,
if $u$ is semiconcave  in $\Omega$ with constant $C$, then \eqref{criterion_for_lin_semiconcave} holds for any $x\in\Omega$ and $p\in D^+u(x)$.
\end{Pro}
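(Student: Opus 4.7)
The proposition is a two-sided characterization of the semiconcavity constant $C$ by a pointwise quadratic upper bound, with the realizing vectors $p$ being elements of the superdifferential $D^+u$. My plan is to prove the two implications separately, both by short direct computation.

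For the direction ``quadratic upper bound $\Rightarrow$ semiconcavity'', the plan is: given $x,y\in\Omega$ and $\lambda\in[0,1]$, set $z=\lambda x+(1-\lambda)y$ (which lies in $\Omega$ by convexity), and apply the hypothesized inequality at $z$, with its associated $p=p(z)\in\R^n$, to the two choices $w=x$ and $w=y$. I will then take the convex combination weighted by $\lambda$ and $1-\lambda$. The linear term should vanish since $\lambda(x-z)+(1-\lambda)(y-z)=0$, and the quadratic coefficient should simplify via $\lambda(1-\lambda)^2+(1-\lambda)\lambda^2=\lambda(1-\lambda)$, yielding exactly \eqref{eq:SCC}. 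For the claim $p\in D^+u(x)$ under the quadratic bound, I will substitute directly into the definition of $D^+u(x)$: the bound at $x$ gives $(u(y)-u(x)-\langle p,y-x\rangle)/|y-x|\leqslant \tfrac{C}{2}|y-x|\to 0$ as $y\to x$.

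For the converse, I would fix $x\in\Omega$ and $p\in D^+u(x)$, take an arbitrary $y\in\Omega$, and use convexity of $\Omega$ to ensure $[x,y]\subset\Omega$; then for $\lambda\in(0,1]$ I set $x_\lambda=(1-\lambda)x+\lambda y$. Semiconcavity with constant $C$ provides the lower bound
\begin{equation*}
u(x_\lambda)\geqslant (1-\lambda)u(x)+\lambda u(y)-\frac{C}{2}\lambda(1-\lambda)|y-x|^2.
\end{equation*}
Subtracting $u(x)+\lambda\langle p,y-x\rangle$ from both sides and dividing by $\lambda$, this rearranges into
\begin{equation*}
\frac{u(x_\lambda)-u(x)-\langle p,x_\lambda-x\rangle}{\lambda}\geqslant u(y)-u(x)-\langle p,y-x\rangle-\frac{C}{2}(1-\lambda)|y-x|^2.
\end{equation*}
Since $|x_\lambda-x|=\lambda|y-x|$, the assumption $p\in D^+u(x)$ forces the left-hand side to have $\limsup\leqslant 0$ as $\lambda\to 0^+$; passing to the limit on the right yields the desired global quadratic upper bound.

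The main delicate point is in the converse: $p\in D^+u(x)$ encodes only the infinitesimal behaviour of $u$ near $x$, whereas we want an inequality valid for all $y\in\Omega$. The key idea is that semiconcavity along the entire segment from $x$ to $y$ supplies a one-parameter family of lower bounds for $u(x_\lambda)$, and this family, when probed by the infinitesimal superdifferential information at $x$, amplifies into the full one-sided inequality at $y$. Convexity of $\Omega$ is essential for carrying out this argument on the segment $[x,y]$.
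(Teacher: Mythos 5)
Your proof is correct. The paper itself gives no proof of this proposition (it is quoted from Cannarsa--Sinestrari), and your argument is the standard one from that reference: for the first implication, evaluating the quadratic bound at $z=\lambda x+(1-\lambda)y$ against $x$ and $y$ and averaging does yield \eqref{eq:SCC} (the linear terms cancel and $\lambda(1-\lambda)^2+(1-\lambda)\lambda^2=\lambda(1-\lambda)$), while the membership $p\in D^+u(x)$ is immediate from the definition; for the converse, the segment argument with $x_\lambda=(1-\lambda)x+\lambda y$, division by $\lambda$, and the $\limsup$ as $\lambda\to0^+$ (using $|x_\lambda-x|=\lambda|y-x|$ and convexity of $\Omega$) correctly upgrades the infinitesimal information in $D^+u(x)$ to the global inequality \eqref{criterion_for_lin_semiconcave}.
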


\subsection{Viscosity solution and its superdiffferential}

Since our convexity assumption (H1), instead of the standard definition, we say $u:\R^n\to\R$ is a \emph{viscosity solution} of \eqref{eq:intro_HJ} if $u$ is locally semiconcave and satisfies \eqref{eq:intro_HJ} almost everywhere. Comparing to item (d) of Proposition \ref{basic_facts_of_superdifferential}, we have the following structural result of $D^+u(x)$.

\begin{Lem}[\cite{Cannarsa_Sinestrari_book}]\label{lem:structure of D+u}
For any $x\in\R^n$, we have
\begin{align*}
	\mbox{\rm ext}\,(D^{+}u(x))=D^{*}u(x)=\{p\in D^{+}u(x):H(x,p,u(x))=0\},
\end{align*}
where $\mbox{\rm ext}\, C$ stands for the set of extremal points with respect to the convex set $C$.
\end{Lem}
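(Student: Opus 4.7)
The statement asserts the chain of equalities
\[
\mathrm{ext}(D^{+}u(x))=D^{\ast}u(x)=\{p\in D^{+}u(x):H(x,p,u(x))=0\}.
\]
My plan is to first identify $D^{\ast}u(x)$ with the $H=0$ level set inside $D^{+}u(x)$ (this is where the PDE enters), and then to derive the extreme-point characterization as a purely convex-geometric consequence, leaning throughout on the structural identity $D^{+}u(x)=\mathrm{co}\,D^{\ast}u(x)$ from Proposition \ref{basic_facts_of_superdifferential}(d) together with the strict convexity of $H(x,\cdot,u(x))$.

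For the second equality, one direction is essentially a limit argument: given $p\in D^{\ast}u(x)$, pick a sequence $x_k\to x$ of differentiability points with $Du(x_k)\to p$. The intersection of ``$u$ is differentiable'' and ``$H(y,Du(y),u(y))=0$'' is a set of full measure; using the continuity of $Du$ on the differentiability set of a semiconcave function, one can perturb each $x_k$ by at most $1/k$ to land in this intersection while preserving both $x_k\to x$ and $Du(x_k)\to p$. Continuity of $H$ then yields $H(x,p,u(x))=0$, and $p\in D^{+}u(x)$ follows from the upper semicontinuity of $D^{+}u$ (Proposition \ref{basic_facts_of_superdifferential}(b)). For the reverse direction, I take $p\in D^{+}u(x)$ with $H(x,p,u(x))=0$ and write $p=\sum_i\lambda_i p_i$ with $p_i\in D^{\ast}u(x)$ via Proposition \ref{basic_facts_of_superdifferential}(d). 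By what was just proved, $H(x,p_i,u(x))=0$ for each $i$; strict convexity of $H(x,\cdot,u(x))$ in the equality case of Jensen's inequality then forces all the $p_i$ with $\lambda_i>0$ to coincide, so $p\in D^{\ast}u(x)$.

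For the first equality I argue each inclusion in turn. If $p\in\mathrm{ext}(D^{+}u(x))$, then writing $p=\sum_i\lambda_i p_i$ with $p_i\in D^{\ast}u(x)$ and iterating the extreme-point property yields $p=p_i$ for every index with $\lambda_i>0$, so $p\in D^{\ast}u(x)$. Conversely, suppose $p\in D^{\ast}u(x)$ admits a nontrivial decomposition $p=\lambda q_1+(1-\lambda)q_2$ with $q_1\ne q_2\in D^{+}u(x)$ and $\lambda\in(0,1)$. Writing each $q_i$ as a convex combination of elements of $D^{\ast}u(x)$ and invoking convexity of $H(x,\cdot,u(x))$ gives $H(x,q_i,u(x))\le 0$; strict convexity then produces
\[
0=H(x,p,u(x))<\lambda H(x,q_1,u(x))+(1-\lambda)H(x,q_2,u(x))\le 0,
\]
a contradiction. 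Hence every $p\in D^{\ast}u(x)$ is extreme.

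I expect the subtlest point to be the opening step of the second equality, namely upgrading an arbitrary sequence $x_k\to x$ drawn from the definition of $D^{\ast}u(x)$ to one on which the PDE actually holds. Once this measure-theoretic/continuity input is secured, the remainder is a clean interplay between the strict convexity of $H$ in $p$ and the identity $D^{+}u(x)=\mathrm{co}\,D^{\ast}u(x)$.
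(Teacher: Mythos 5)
The paper does not prove this lemma at all: it is quoted from Cannarsa--Sinestrari \cite{Cannarsa_Sinestrari_book}, so there is no in-paper argument to compare against. Judged on its own, your proof is correct and is essentially the standard textbook argument. The two pillars you use are exactly the right ones: (i) the identity $D^{+}u(x)=\mathrm{co}\,D^{*}u(x)$ from Proposition \ref{basic_facts_of_superdifferential}(d), and (ii) strict convexity of $H(x,\cdot,u(x))$, with the equality case of Jensen's inequality giving both the reverse inclusion of the second equality and the extremality of reachable gradients. You also correctly identified the one genuinely delicate step given this paper's definition of viscosity solution (locally semiconcave and the equation holding only a.e.): to get $H(x,p,u(x))=0$ for $p\in D^{*}u(x)$ one must upgrade the defining sequence of differentiability points to points where the equation actually holds, and your perturbation argument does this properly, since upper semicontinuity of $D^{+}u$ at a differentiability point $x_k$ gives a radius $\delta_k$ within which differentiable points $y$ satisfy $|Du(y)-Du(x_k)|<1/k$, and the full-measure set where differentiability and the equation both hold is dense, so one can choose the perturbation smaller than $\min\{1/k,\delta_k\}$ (your phrase ``by at most $1/k$'' should be read this way). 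A small alternative worth noting: the inequality $H(x,q,u(x))\leqslant 0$ for all $q\in D^{+}u(x)$, which you derive by writing $q$ as a convex combination of reachable gradients, is just the viscosity subsolution property; your route avoids invoking it as a separate fact, which is appropriate here since the paper defines solutions via the a.e.\ formulation rather than the test-function one.
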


Let us recall some facts from convex analysis. For any non-empty convex compact set $K\subset\R^n$ and $\theta\in\R^n$, we define
\begin{align*}
	\epf(K,\theta)=\{x\in K:\langle x,\theta \rangle \leqslant \langle y,\theta \rangle,\ \forall y\in K\}.
\end{align*}
Obviously, $\epf(K,0)=K$, and we have $\epf(K,\theta)=\epf(K,\lambda\theta)$ when $\lambda>0$.

\begin{Lem}\label{lem:epf}
Suppose $K\subset\R^n$ is a non-empty convex compact set and $\theta\in\R^n$. Then we have
\begin{enumerate}[\rm (1)]
	\item $\epf(K,\theta)$ is non-empty, convex and compact.
	\item If $x\in\epf(K,\theta)$, $x_1,x_2\in K$, $x\in(x_1,x_2)$, then $[x_1,x_2]\subset\epf(K,\theta)$.
	\item If $\epf(K,\theta)=x$ is a singleton, then $x\in\ext(K)$.
\end{enumerate}
\end{Lem}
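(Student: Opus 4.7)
The plan is to set $m := \min_{y\in K}\langle y,\theta\rangle$, noting that the minimum is attained because $\langle\cdot,\theta\rangle$ is continuous and $K$ is compact; then $\epf(K,\theta)=\{y\in K:\langle y,\theta\rangle = m\}$, i.e.\ the inequality ``$\leqslant$'' in the definition is automatically an equality. From this single reformulation, parts (1)--(3) follow in order, with (2) doing the main work.

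For (1), write $\epf(K,\theta)$ as the intersection of $K$ with the affine hyperplane $\{y:\langle y,\theta\rangle = m\}$. Non-emptiness is the attainment of $m$ just noted; closedness (hence compactness inside $K$) follows because the hyperplane is closed and continuity of $\langle\cdot,\theta\rangle$; convexity follows because both $K$ and the hyperplane are convex.

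For (2), suppose $x\in\epf(K,\theta)$ and $x=(1-t)x_1+tx_2$ with $x_1,x_2\in K$ and $t\in(0,1)$. Pairing with $\theta$ and using linearity gives
\[
m=\langle x,\theta\rangle=(1-t)\langle x_1,\theta\rangle+t\langle x_2,\theta\rangle.
\]
Since $\langle x_i,\theta\rangle\geqslant m$ for $i=1,2$ by the definition of $m$, and the weights $1-t$, $t$ are strictly positive, equality in the displayed line forces $\langle x_1,\theta\rangle=\langle x_2,\theta\rangle=m$. Hence $x_1,x_2\in\epf(K,\theta)$, and convexity from (1) yields $[x_1,x_2]\subset\epf(K,\theta)$. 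For (3), suppose $\epf(K,\theta)=\{x\}$ but $x\notin\ext(K)$; then by definition there exist $x_1,x_2\in K$ with $x_1\neq x_2$ and $t\in(0,1)$ such that $x=(1-t)x_1+tx_2$. Applying (2) puts the whole segment $[x_1,x_2]$ inside $\epf(K,\theta)=\{x\}$, forcing $x_1=x_2=x$, a contradiction.

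The only real obstacle is bookkeeping: one must remember that the inequality in the definition of $\epf$ is sharp at the minimum, and one must use the strict positivity of the coefficients in the convex combination to extract equality of both endpoints, since otherwise ``$\geqslant m$'' plus a nonnegative-weight combination equaling $m$ would not pin down each term individually. No deeper convex-analytic machinery (separating hyperplanes, Carath\'eodory, etc.) is needed.
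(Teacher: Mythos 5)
Your proof is correct and follows essentially the same route as the paper: both arguments exploit linearity of $\langle\cdot,\theta\rangle$ and the strict positivity of the convex-combination weights to force equality at both endpoints in part (2), and then deduce (3) from (2); your reformulation via the minimum value $m$ and the hyperplane intersection for (1) is just a slightly more explicit phrasing of what the paper treats as trivial. No gaps.
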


\begin{proof}
The proof of (1) is trivial. Since $x\in(x_1,x_2)$, there exists $\lambda\in(0,1)$ such that $x=\lambda x_1+(1-\lambda)x_2$, that is, $\lambda(x-x_1)=(1-\lambda)(x_2-x)$. Recall $x\in\epf(K,\theta)$ implies $\langle x-x_1,\theta \rangle \leqslant 0$. On the other hand, $\langle x-x_1,\theta \rangle=\frac{1-\lambda}{\lambda}\langle x_2-x,\theta \rangle\geqslant 0$. Thus, we have $\langle x-x_1,\theta \rangle=0$, which implies $x_1\in\epf(K,\theta)$. Similarly, we also have $x_2\in\epf(K,\theta)$. Notice that $\epf(K,\theta)$ is convex, by \rm{(1)}. We conclude that $[x_1,x_2]\subset\epf(K,\theta)$. This completes of proof of (2). Item (3) follows directly from (2).
\end{proof}

\begin{Lem}\label{lem:convex set and function}
Suppose $K\subset\R^n$ is a non-empty convex compact set, and $f\in C^1(\R^n,\R)$ is strictly convex. Then
\begin{enumerate}[\rm (1)]
	\item There exists a unique $x\in K$ such that $f(x)\leqslant f(y)$ for all $y\in K$, and we have $x\in\epf(K,Df(x))$.
	\item If $x\in\epf(K,Df(x))$, then $f(x)\leqslant f(y)$ for all $y\in K$.
\end{enumerate}
\end{Lem}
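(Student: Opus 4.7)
The plan is to handle the two items by standard first-order convex analysis, relying on the fact that the condition $x\in\epf(K,Df(x))$ unpacks as the variational inequality $\langle Df(x),y-x\rangle\geqslant 0$ for every $y\in K$.

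For item (1), I would first establish existence and uniqueness of the minimizer. Existence is immediate since $K$ is compact and $f$ is continuous. Uniqueness comes from strict convexity: if two distinct points $x_1,x_2\in K$ both attained the minimum, then the midpoint lies in $K$ (by convexity of $K$) and would have strictly smaller $f$-value, a contradiction. To prove $x\in\epf(K,Df(x))$, I would use the convexity of $K$: for any $y\in K$ and $t\in(0,1]$, the point $x+t(y-x)$ belongs to $K$, so $f(x+t(y-x))\geqslant f(x)$. Dividing by $t$ and letting $t\to 0^+$ gives $\langle Df(x),y-x\rangle\geqslant 0$, i.e., $\langle x,Df(x)\rangle\leqslant\langle y,Df(x)\rangle$ for all $y\in K$, which is exactly $x\in\epf(K,Df(x))$.

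For item (2), I would invoke the standard gradient inequality for convex $C^1$ functions: $f(y)\geqslant f(x)+\langle Df(x),y-x\rangle$ for every $y\in\R^n$. Combining this with the hypothesis $x\in\epf(K,Df(x))$, which gives $\langle Df(x),y-x\rangle\geqslant 0$ for every $y\in K$, yields $f(y)\geqslant f(x)$ on $K$.

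There is no real obstacle here; the only subtle point to keep clean is the direction of the inequality in the definition of $\epf(K,\theta)$, which reads $\langle x,\theta\rangle\leqslant\langle y,\theta\rangle$, translating to $\langle\theta,y-x\rangle\geqslant 0$ and thus matching the first-order minimality condition. Everything else is routine, and the proof should fit in a few lines.
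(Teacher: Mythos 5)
Your proof is correct. The paper states this lemma without proof (treating it as standard convex analysis), and your argument is precisely the natural one: existence by compactness and continuity, uniqueness by strict convexity applied to the midpoint, the inclusion $x\in\epf(K,Df(x))$ by the first-order optimality condition $\langle Df(x),y-x\rangle\geqslant 0$ obtained from differentiating along $t\mapsto f(x+t(y-x))$ at $t=0^+$, and the converse (item (2)) by the gradient inequality $f(y)\geqslant f(x)+\langle Df(x),y-x\rangle$. You also correctly unwind the sign convention in the definition of $\epf(K,\theta)$, which is the only place one could slip.
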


The following Lemma is a direct consequence of Lemma  \ref{lem:structure of D+u} and Lemma \ref{lem:convex set and function}.
 
\begin{Lem}\label{lem:properties of minimal p}
For any $x\in\R^n$, we have
\begin{enumerate}[\rm (1)]
	\item There exists a unique $p^{\#}(x)\in D^{+}u(x)$ such that
	\begin{align*}
		H(x,p^{\#}(x),u(x))\leqslant H(x,p,u(x)),\qquad \forall p\in D^{+}u(x).
	\end{align*}
    \item Denote $v^{\#}(x)=H_p(x,p^{\#}(x),u(x))$, then $p^{\#}(x)\in\epf(D^{+}u(x),v^{\#}(x))$.
    \item If $p\in\epf(D^{+}u(x),H_p(x,p,u(x)))$, then $p=p^{\#}(x)$.
	\item If $x\in\Sigma(u)$, then $H(x,p^{\#}(x),u(x))<0$ and $p^{\#}(x)\in D^{+}u(x)\setminus D^{*}u(x)$.
\end{enumerate}
\end{Lem}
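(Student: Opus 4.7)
The plan is to treat the four items as largely corollaries of the two preceding Lemmas (\ref{lem:structure of D+u} and \ref{lem:convex set and function}) specialized to the convex compact set $K := D^+u(x)$ and the strictly convex $C^1$ function $f := H(x,\cdot,u(x))$, where strict convexity in $p$ is exactly (H1) and compactness/convexity of $D^+u(x)$ is Proposition \ref{basic_facts_of_superdifferential}(a). The only genuine content beyond an invocation of these lemmas is item (4), which requires combining the structural description of extremal points in Lemma \ref{lem:structure of D+u} with the strict convexity of $H$ in $p$.

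For items (1) and (2), I would simply observe that $K = D^+u(x)$ is nonempty, convex, and compact, and apply Lemma \ref{lem:convex set and function}(1) with $f = H(x,\cdot,u(x))$. This immediately yields a unique minimizer $p^{\#}(x)$ together with the membership $p^{\#}(x) \in \epf(K, Df(p^{\#}(x))) = \epf(D^+u(x), v^{\#}(x))$, since $Df(p^{\#}(x)) = H_p(x,p^{\#}(x),u(x)) = v^{\#}(x)$ by definition. For item (3), given a $p$ with $p \in \epf(D^+u(x), H_p(x,p,u(x)))$, Lemma \ref{lem:convex set and function}(2) applied with $K = D^+u(x)$ and the same $f$ says $p$ is a minimizer of $f$ over $K$; the uniqueness obtained in (1) then forces $p = p^{\#}(x)$.

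The main step is item (4). Suppose $x \in \Sigma(u)$, so $D^+u(x)$ is not a singleton. By Proposition \ref{basic_facts_of_superdifferential}(d), $D^+u(x) = \mathrm{co}\, D^{\ast}u(x)$, and by Lemma \ref{lem:structure of D+u}, $\mathrm{ext}(D^+u(x)) = D^{\ast}u(x)$; since a nontrivial convex compact set in $\R^n$ has at least two distinct extremal points, I can pick $p_1, p_2 \in D^{\ast}u(x)$ with $p_1 \ne p_2$. By Lemma \ref{lem:structure of D+u} again, $H(x,p_i,u(x)) = 0$ for $i=1,2$. Setting $\bar p := \tfrac{1}{2}(p_1+p_2) \in D^+u(x)$ and using strict convexity of $H(x,\cdot,u(x))$ from (H1),
\begin{equation*}
H(x,\bar p,u(x)) < \tfrac{1}{2}H(x,p_1,u(x)) + \tfrac{1}{2}H(x,p_2,u(x)) = 0.
\end{equation*}
By the defining minimality of $p^{\#}(x)$ from (1),
\begin{equation*}
H(x,p^{\#}(x),u(x)) \leqslant H(x,\bar p,u(x)) < 0.
\end{equation*}
Finally, Lemma \ref{lem:structure of D+u} characterizes $D^{\ast}u(x)$ as exactly those $p \in D^+u(x)$ with $H(x,p,u(x)) = 0$, so the strict inequality $H(x,p^{\#}(x),u(x)) < 0$ forces $p^{\#}(x) \notin D^{\ast}u(x)$, while $p^{\#}(x) \in D^+u(x)$ by construction.

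The hardest step is the small convex-geometric observation used in (4) that a nonsingleton convex compact subset of $\R^n$ has at least two distinct extremal points; this is standard (Krein–Milman, or more elementary since $K \subset \R^n$) and once in hand the rest of (4) is one line of strict convexity. The remaining three items are essentially a direct translation of the abstract Lemma \ref{lem:convex set and function} into the present $H$/$D^+u$ notation, so no new ideas are needed there.
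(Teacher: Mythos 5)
Your proof is correct and follows exactly the route the paper intends: the paper states Lemma \ref{lem:properties of minimal p} as a direct consequence of Lemma \ref{lem:structure of D+u} and Lemma \ref{lem:convex set and function} without writing out details, and your items (1)--(3) are precisely that specialization with $K=D^+u(x)$ and $f=H(x,\cdot,u(x))$. Your item (4), using two distinct extremal points (which lie in $D^*u(x)$ where $H$ vanishes) and strict convexity at their midpoint, is the natural filling-in of the omitted argument and is sound.
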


\section{Proof of the main result}

\subsection{A bi-Lipschitz homeomorphism}

\begin{Lem}\label{lem:translation by C2 f}
Suppose $f\in C^2(\R^n,\R)$, and let
\begin{align*}
	\tilde{H}(x,p,w)&=H(x,p+Df(x),w+f(x)),\qquad (x,p,w)\in\R^n\times\R^n\times\R,\\
	w(x)&=u(x)-f(x),\qquad x\in \R^n.
\end{align*}
Then we have
\begin{enumerate}[\rm (1)]
	\item $\tilde{H}$ is of class $C^1$ and satisfies \rm{(H1)}. $w$ is locally semiconcave on $\R^n$ and we have
	\begin{align*}
		D^+w(x)=D^+u(x)-Df(x),\ D^*w(x)=D^*u(x)-Df(x),\qquad \forall x\in\R^n.
	\end{align*}
	Moreover, $w$ is a viscosity solution of
	\begin{align*}
		\tilde{H}(x,Dw(x),w(x))=0,\qquad x\in\R^n.
	\end{align*}
    \item $\Sigma(w)=\Sigma(u)$.
	\item Let $\tilde{p},\tilde{v}$ be the vectors corresponding to $p^{\#},v^{\#}$ in Lemma \ref{lem:properties of minimal p} for $\tilde{H}$ and $w$ respectively, then
	\begin{align*}
		\tilde{p}(x)=p^{\#}(x)-Df(x),\ \tilde{v}(x)=v^{\#}(x),\qquad \forall x\in\R^n.
	\end{align*}	
\end{enumerate}
\end{Lem}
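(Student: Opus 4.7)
The plan is to reduce each of the three assertions to a direct computation based on the identity $w=u-f$ together with the $C^2$ regularity of $f$; no deep idea is required, only bookkeeping.

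For (1), I would verify the statements in order. That $\tilde H\in C^1$ follows because it is the composition of $H\in C^1$ with the $C^1$ map $(x,p,w)\mapsto(x,p+Df(x),w+f(x))$ (using $f\in C^2$). Strict convexity of $\tilde H(x,\cdot,w)$ is immediate since it is just the translate of $H(x,\cdot,w+f(x))$ by $-Df(x)$. Local semiconcavity of $w$ follows because $w=u-f$ is the sum of the locally semiconcave function $u$ and the $C^2$ (in particular locally semiconcave) function $-f$. For the identity $D^+w(x)=D^+u(x)-Df(x)$, I would use the defining $\limsup$ inequality together with the Taylor expansion $f(y)-f(x)-\langle Df(x),y-x\rangle=o(|y-x|)$ to pass from one superdifferential to the other; the two inclusions are symmetric. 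The analogous identity $D^-w(x)=D^-u(x)-Df(x)$ follows the same way, hence the differentiability sets of $u$ and $w$ coincide, and at such points $Dw=Du-Df$. Using the continuity of $Df$, taking a sequential limit along differentiability points yields $D^*w(x)=D^*u(x)-Df(x)$. Finally, since by convexity (H1) a viscosity solution is exactly a locally semiconcave function satisfying the equation a.e., I would check at a.e.\ $x$:
\[
\tilde H(x,Dw(x),w(x))=H\bigl(x,Dw(x)+Df(x),w(x)+f(x)\bigr)=H(x,Du(x),u(x))=0.
\]

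Assertion (2) is then immediate: $D^+w(x)$ is a translate of $D^+u(x)$, so one is a singleton precisely when the other is, whence $\Sigma(w)=\Sigma(u)$.

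For (3), I would apply Lemma \ref{lem:properties of minimal p} to the pair $(\tilde H,w)$. Parametrizing $D^+w(x)=\{p-Df(x):p\in D^+u(x)\}$ and using
\[
\tilde H(x,p-Df(x),w(x))=H(x,p,u(x)),
\]
the problem of minimizing $\tilde H(x,\cdot,w(x))$ over $D^+w(x)$ is equivalent to minimizing $H(x,\cdot,u(x))$ over $D^+u(x)$, so the unique minimizer is $\tilde p(x)=p^{\#}(x)-Df(x)$. The chain rule gives $\tilde H_p(x,q,w)=H_p(x,q+Df(x),w+f(x))$, hence $\tilde v(x)=H_p(x,p^{\#}(x),u(x))=v^{\#}(x)$.

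The whole argument is a sequence of routine verifications. The only step needing any care is the superdifferential-translation identity in (1), where one must absorb the $o(|y-x|)$ remainder from the Taylor expansion of $f$; this is straightforward because $f\in C^2$ implies the remainder is even $O(|y-x|^2)$. I expect no real obstacle.
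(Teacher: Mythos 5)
Your proposal is correct and follows essentially the same route as the paper. The paper simply states that (1) and (2) follow from the $C^2$ regularity of $f$ and gives only the computation for (3) (which matches yours verbatim in substance); you have merely filled in the routine details behind (1) and (2) that the authors chose to omit.
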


\begin{proof}
\rm{(1)} and \rm{(2)} follows from the $C^2$ regularity of $f$. We only need to prove \rm{(3)}. For any $x\in\R^n$ and $p\in D^+w(x)$, we have
\begin{align*}
	\tilde{H}(x,p^{\#}(x)-Df(x),w(x))=H(x,p^{\#}(x),u(x))\leqslant H(x,p+Df(x),u(x))=\tilde{H}(x,p,w(x)).
\end{align*}
This implies $\tilde{p}(x)=p^{\#}(x)-Df(x)$. Notice that
\begin{align*}
	\tilde{H}_p(x,p,w)=H_p(x,p+Df(x),w+f(x)),\qquad \forall (x,p,w)\in\R^n\times\R^n\times\R.
\end{align*}
Thus, we have
\begin{align*}
	\tilde{v}(x)=\tilde{H}_p(x,\tilde{p}(x),w(x))=H_p(x,\tilde{p}(x)+Df(x),w(x)+f(x))=H_p(x,p^{\#}(x),u(x))=v^{\#}(x).
\end{align*}
This completes the proof.
\end{proof}

Notice that the conclusion in Theorem \ref{thm:existence} is local. We only need to make our proof for any bounded open convex set $U\subset\R^n$ and $x_0\in U$. In the following, we suppose $U\subset\R^n$ is bounded, open and convex, and $u$ is linearly semiconcave on $U$ with constant $C_0$. Let $f(x)=\frac{1}{2}(C_0+1)|x|^2$ in Lemma \ref{lem:translation by C2 f}, then $w(x)=u(x)-\frac{1}{2}(C_0+1)|x|^2$ is uniformly concave with constant $-1$. By Lemma \ref{lem:translation by C2 f} \rm{(2)} \rm{(3)}, Theorem \ref{thm:existence} holds for $H$ and $u$ if and only if it holds for $\tilde{H}$ and $w$. Lemma \ref{lem:translation by C2 f} \rm{(1)} shows $\tilde{H}$ and $w$ satisfy the same condition as $H$ and $u$. Therefore, with out loss of generality, we always suppose that $u$ is uniformly concave on $U$ with constant $-1$.

\begin{Lem}[\cite{Cannarsa_Sinestrari_book}]\label{lem:uniformly concave function}
\hfill
\begin{enumerate}[\rm (1)]
		\item For any $x_1,x_2\in U$, $p_i\in D^{+}u(x_i),\ i=1,2$, we have
		\begin{align*}
			\langle p_2-p_1,x_2-x_1 \rangle \leqslant -|x_2-x_1|^2.
		\end{align*}
		\item Set $x\in\R^n$, $\theta\in S^{n-1}$. If $\{x_i\}\subset\R^n\setminus\{x\}$ and $p_i\in D^{+}u(x_i),\ i\in\N$ satisfies
		\begin{align*}
			\lim_{i\to\infty}x_i=x,\ \lim_{i\to\infty}\frac{x_i-x}{|x_i-x|}=\theta,\ \lim_{i\to\infty}p_i=p\in\R^n,
		\end{align*}
		then $p\in\epf(D^{+}u(x),\theta)$.
\end{enumerate}
\end{Lem}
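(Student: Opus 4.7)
The plan is to exploit the upgrade of the semiconcavity inequality of Proposition \ref{criterion-Du_semiconcave2} to the uniform-concavity regime (constant $-1$ in the sense of inequality \eqref{eq:SCC}), namely that for every $x\in U$ and every $p\in D^{+}u(x)$,
\begin{equation*}
u(y)\leqslant u(x)+\langle p,y-x\rangle-\tfrac{1}{2}|y-x|^2,\qquad \forall y\in U.
\end{equation*}
Both items then follow by standard monotonicity arguments for subdifferentials of convex (here concave) functions, so I do not expect any real obstacle, just bookkeeping.

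For item (1), I would apply the displayed inequality twice, once at the base point $x_1$ with gradient $p_1$ and test point $y=x_2$, and once with the roles of $1$ and $2$ swapped. Adding the two resulting inequalities cancels the values $u(x_1),u(x_2)$ and produces
\begin{equation*}
0\leqslant \langle p_1-p_2, x_2-x_1\rangle - |x_2-x_1|^2,
\end{equation*}
which is exactly the cyclic monotonicity bound $\langle p_2-p_1,x_2-x_1\rangle\leqslant -|x_2-x_1|^2$.

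For item (2), I would fix an arbitrary $q\in D^{+}u(x)$ and apply (1) to the pair $(x_i,p_i)$ and $(x,q)$, which yields
\begin{equation*}
\langle p_i-q, x_i-x\rangle \leqslant -|x_i-x|^2.
\end{equation*}
Since $x_i\neq x$, dividing by $|x_i-x|$ gives
\begin{equation*}
\left\langle p_i-q,\ \frac{x_i-x}{|x_i-x|}\right\rangle \leqslant -|x_i-x|.
\end{equation*}
Now pass to the limit $i\to\infty$ using the three convergences in the hypothesis: the left-hand side tends to $\langle p-q,\theta\rangle$ while the right-hand side tends to $0$. Hence $\langle p,\theta\rangle\leqslant \langle q,\theta\rangle$ for every $q\in D^{+}u(x)$, i.e., $p\in\epf(D^{+}u(x),\theta)$ by definition. (The limit vector $p$ itself lies in $D^{+}u(x)$ by the upper semicontinuity statement of Proposition \ref{basic_facts_of_superdifferential}(b), which makes the conclusion well-posed.) The only subtle point — and the closest thing to an obstacle — is keeping the direction of the inequality straight between $\epf$, which selects minimizers of $\langle\cdot,\theta\rangle$, and the sign convention of uniform concavity; once the reduction to constant $-1$ from the preceding paragraph is in place, everything aligns.
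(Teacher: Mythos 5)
Your argument is correct, and the paper itself supplies no proof here — Lemma \ref{lem:uniformly concave function} is stated with only a citation to \cite{Cannarsa_Sinestrari_book}. Your derivation is exactly the standard one: (1) follows by applying the uniform-concavity inequality at both base points and adding, and (2) by applying (1) against an arbitrary $q\in D^+u(x)$, dividing by $|x_i-x|$, and passing to the limit; the observation that $p\in D^+u(x)$ via the upper semicontinuity in Proposition \ref{basic_facts_of_superdifferential}(b) is the right way to close the loop. So the proposal is correct and, as far as one can tell, matches the approach the cited reference would take.
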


We define the map
\begin{align*}
	\varphi:\gra(D^{+}u\Big\vert_{U})\to\R^n,\qquad (x,p)\mapsto p.
\end{align*}
Obviously, $\varphi((x,D^{+}u(x)))=D^{+}u(x)$ for all $x\in U$.

\begin{Lem}\label{lem:properties of varphi}
\hfill
\begin{enumerate}[\rm (1)]
	\item $\varphi$ is injective.
	\item $\varphi$ is Lipschitz with constant $1$.
	\item For any $x\in U$, there exists $\delta_x>0$ such that $D^{+}u(x)+B_{\delta_x}\subset \mbox{\rm Im}\,(\varphi)$.
\end{enumerate}
\end{Lem}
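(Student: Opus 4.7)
The plan for \rm{(1)} is to invoke the monotonicity relation of Lemma \ref{lem:uniformly concave function}\rm{(1)}. If $\varphi(x_1,p_1)=\varphi(x_2,p_2)$, the common value $p$ belongs to $D^{+}u(x_1)\cap D^{+}u(x_2)$, so
\[
 -|x_2-x_1|^2\geqslant \langle p-p,x_2-x_1\rangle=0,
\]
forcing $x_1=x_2$ and hence $(x_1,p_1)=(x_2,p_2)$. Part \rm{(2)} is essentially formal: once $\mathrm{graph}(D^{+}u|_U)\subset\R^n\times\R^n$ carries the restriction of the Euclidean metric, $|\varphi(x_1,p_1)-\varphi(x_2,p_2)|=|p_1-p_2|\leqslant\sqrt{|x_1-x_2|^2+|p_1-p_2|^2}$, so $\varphi$ is $1$-Lipschitz.

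The substantive assertion is \rm{(3)}. Since $\mbox{\rm Im}\,(\varphi)=\bigcup_{y\in U}D^{+}u(y)$, I would prove that this union contains a full Euclidean neighborhood of $D^{+}u(x)$ by a variational perturbation argument. Fix $x\in U$, pick $r>0$ with $\overline{B(x,2r)}\subset U$, and take $\delta_x=r$. Given $p_0\in D^{+}u(x)$ and $q\in B_r$, consider
\[
 g(y)=u(y)-\langle p_0+q,y\rangle,\qquad y\in U,
\]
which remains uniformly concave with constant $-1$. The estimate from Proposition \ref{criterion-Du_semiconcave2} applied to $u$ at $x$ with $p_0\in D^{+}u(x)$ rearranges to
\[
 g(y)-g(x)\leqslant -\langle q,y-x\rangle-\tfrac12|y-x|^2\leqslant |q|\,|y-x|-\tfrac12|y-x|^2,
\]
which is strictly negative for $|y-x|>2|q|$. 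Hence $\sup_{U}g=\sup_{\overline{B(x,2|q|)}}g$, and the latter is attained at a unique $y^{\ast}\in\overline{B(x,2|q|)}\subset B(x,2r)\subset U$ by continuity and the strict concavity of $g$. Because $y^{\ast}$ lies in the interior of $U$, the first-order condition gives $0\in D^{+}g(y^{\ast})=D^{+}u(y^{\ast})-(p_0+q)$, so $p_0+q\in D^{+}u(y^{\ast})\subset\mbox{\rm Im}\,(\varphi)$.

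The delicate point is the quadratic coercivity step: it is the genuine gain $-\tfrac12|y-x|^2$ from uniform concavity that confines the maximizer of $g$ to a prescribed small ball around $x$ and, crucially, keeps it inside $U$. Without this term one could not prevent the maximizer from escaping to the boundary of $U$, and the argument would collapse. Once the localization is in place, attainment of the supremum and the first-order inclusion $0\in D^{+}g(y^{\ast})$ are standard consequences of compactness and the semiconcave calculus.
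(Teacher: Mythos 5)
Your proposal is correct and follows essentially the same route as the paper: parts (1) and (2) use the same monotonicity and projection arguments, and for (3) both proofs localize the maximizer of the perturbed potential $y\mapsto u(y)-\langle p,y\rangle$ to a small ball inside $U$ by exploiting the quadratic gain from uniform concavity, then extract $p\in D^{+}u(y^{\ast})$ from the first-order condition at the interior maximum. The only cosmetic differences are your choice to write $p=p_0+q$ with $p_0\in D^{+}u(x)$ rather than projecting $p$ onto $D^{+}u(x)$, and a slightly different normalization of the radius $\delta_x$.
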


\begin{proof}
If $\varphi(x_i,p)=p,\ i=1,2$, then $p\in D^{+}u(x_i),\ i=1,2$. By Lemma \ref{lem:uniformly concave function} (1), we have
\begin{align*}
	0=\langle p-p,x_2-x_1 \rangle \leqslant -|x_2-x_1|^2.
\end{align*}
This implies $x_1=x_2$. Therefore, $\varphi$ is injective. This leads to (1). Item (2) follows directly from the definition of $\varphi$.

Now, turn to prove (3). For any $x\in U$, there exists $\varepsilon_x>0$ such that $\bar{B}_{\varepsilon_x}(x)\subset U$. Let $\delta_x=\frac{1}{4}\varepsilon_x$. For any $p\in D^{+}u(x)+\bar{B}_{\delta_x}$, we define
\begin{align*}
	\phi_p(y)=u(y)-\langle p,y \rangle,\qquad y\in \R^n,
\end{align*}
and denote $p'$ to be the projection of $p$ on $D^{+}u(x)$, that is, $p'=\argmin_{q\in D^{+}u(x)}|q-p|$. Then we have
\begin{align*}
	\phi_p(y)-\phi_p(x)&=u(y)-\langle p,y \rangle-u(x)+\langle p,x \rangle\\
	&=u(y)-u(x)-\langle p',y-x \rangle +\frac{1}{2}|y-x|^2+\langle p'-p,y-x \rangle-\frac{1}{2}|y-x|^2\\
	&\leqslant \langle p'-p,y-x \rangle-\frac{1}{2}|y-x|^2\\
	&\leqslant |y-x|(|p'-p|-\frac{1}{2}|y-x|)\\
	&\leqslant |y-x|(\frac{1}{4}\varepsilon_x-\frac{1}{2}|y-x|),\qquad \forall y\in\R^n.
\end{align*}
When $|y-x|=\varepsilon_x$, there holds $\phi_p(y)-\phi_p(x)\leqslant -\frac{1}{4}\varepsilon_x^2<0$. Since $\phi_p$ is also uniformly concave on $U$, $\phi_p$ has a unique maximum point $x(p)$ in $B_{\varepsilon_x}(x)$. Therefore, $0\in D^{+}\phi_p(x(p))=D^{+}u(x(p))-p$, that is, $p\in D^{+}u(x(p))$. This implies $p=\varphi(x(p),p)$. So we have $D^{+}u(x)+B_{\delta_x}\subset\mbox{\rm Im}\,(\varphi)$.
\end{proof}

Since $\varphi$ is injective, we can define
\begin{align*}
	\varphi^{-1}:\mbox{\rm Im}\,(\varphi)\to \gra(D^{+}u\Big\vert_U),\ p\mapsto\varphi^{-1}(p)=(x(p),p).
\end{align*}
Obviously, for all $p\in\mbox{\rm Im}\,(\varphi)$, we have $p\in D^{+}u(x(p))$.

\begin{Lem}\label{lem:bi-Lip}
\hfill
\begin{enumerate}[\rm (1)]
		\item $|x(p_2)-x(p_1)|\leqslant|p_2-p_1|,\ \forall p_1,p_2\in\mbox{\rm Im}\,(\varphi)$.
		\item $\varphi:\gra(D^{+}u\Big\vert_U)\to\mbox{\rm Im}\,(\varphi)$ is a bi-Lipschitz homeomorphism with Lipschitz constant $2$.
	\end{enumerate}
\end{Lem}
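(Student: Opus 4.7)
The plan is to read off (1) directly from the uniform concavity of $u$ on $U$ (Lemma \ref{lem:uniformly concave function}(1)), and then deduce (2) by combining (1) with the already-established Lipschitz estimate for $\varphi$ in Lemma \ref{lem:properties of varphi}(2).

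For part (1), I would fix $p_1,p_2\in\mathrm{Im}(\varphi)$ and set $x_i=x(p_i)$, so that by construction $p_i\in D^{+}u(x_i)$ with $x_i\in U$. Applying Lemma \ref{lem:uniformly concave function}(1) to the pair $(x_1,p_1),(x_2,p_2)$ yields
\begin{equation*}
\langle p_2-p_1,\,x_2-x_1\rangle\leqslant -|x_2-x_1|^2,
\end{equation*}
hence $|x_2-x_1|^2\leqslant \langle p_1-p_2,\,x_2-x_1\rangle\leqslant |p_1-p_2|\,|x_2-x_1|$ by Cauchy--Schwarz. Dividing by $|x_2-x_1|$ (the case $x_1=x_2$ is trivial) gives $|x(p_2)-x(p_1)|\leqslant|p_2-p_1|$.

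For part (2), injectivity of $\varphi$ is Lemma \ref{lem:properties of varphi}(1), so $\varphi$ is a bijection onto its image and $\varphi^{-1}$ is well defined by $\varphi^{-1}(p)=(x(p),p)$. Lemma \ref{lem:properties of varphi}(2) already provides $|\varphi(x_1,p_1)-\varphi(x_2,p_2)|=|p_1-p_2|$, so $\varphi$ itself is Lipschitz with constant $1$. For the inverse, we use part (1) together with the trivial bound $|p_2-p_1|\leqslant |p_2-p_1|$ to estimate, with respect to the product Euclidean norm on $U\times\R^n$,
\begin{equation*}
|\varphi^{-1}(p_2)-\varphi^{-1}(p_1)|=\sqrt{|x(p_2)-x(p_1)|^2+|p_2-p_1|^2}\leqslant\sqrt{2}\,|p_2-p_1|\leqslant 2|p_2-p_1|.
\end{equation*}
Both $\varphi$ and $\varphi^{-1}$ are therefore Lipschitz with constant at most $2$, which gives the claim and simultaneously shows that $\varphi$ is a homeomorphism onto $\mathrm{Im}(\varphi)$.

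There is no real obstacle here; the whole content has already been placed in the preceding lemmas. The uniform concavity constant $-1$ (obtained after the $C^2$ reduction via Lemma \ref{lem:translation by C2 f}) is what makes the coefficient in the inequality from Lemma \ref{lem:uniformly concave function}(1) exactly $-|x_2-x_1|^2$ and hence what produces Lipschitz constant $1$ in (1); any step of the proof would break without it, so the mild caveat is just to make sure this normalization is invoked rather than any generic semiconcavity bound.
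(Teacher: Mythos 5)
Your proof follows exactly the paper's route: part (1) comes from Lemma \ref{lem:uniformly concave function}(1) plus Cauchy--Schwarz and the trivial case $x(p_1)=x(p_2)$, and part (2) is read off from (1) together with Lemma \ref{lem:properties of varphi}(2). The only difference is that you spell out the $\sqrt{2}\le 2$ estimate for $\varphi^{-1}$ on the product space, which the paper leaves implicit; this is fine.
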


\begin{proof}
By Lemma \ref{lem:uniformly concave function} \rm{(1)}, we have
	\begin{align*}
		\langle p_2-p_1,x(p_2)-x(p_1) \rangle\leqslant -|x(p_2)-x(p_1)|^2.
	\end{align*}
It follows that
	\begin{align*}
		|x(p_2)-x(p_1)|^2\leqslant|p_2-p_1||x(p_2)-x(p_1)|.
	\end{align*}
If $x(p_2)=x(p_1)$, then $|x(p_2)-x(p_1)|\leqslant|p_2-p_1|$ obviously holds. If $x(p_2)\neq x(p_1)$, we also obtain $|x(p_2)-x(p_1)|\leqslant|p_2-p_1|$. (2) is a consequence of (1) and Lemma \ref{lem:properties of varphi} \rm{(2)}.
\end{proof}

Next, we denote
\begin{align*}
	A^{\#}=\{p\in\mbox{\rm Im}\,(\varphi):p\in D^+u(x(p))\setminus D^*u(x(p))\}.
\end{align*}

\begin{Lem}\label{lem:prop of A}
\hfill
\begin{enumerate}[\rm (1)]
		\item $x(A^{\#})=\Sigma(u)\cap U$.
		\item $A^{\#}$ is open in $\R^n$.
	\end{enumerate}
\end{Lem}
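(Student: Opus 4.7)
The plan is to prove (1) by a direct double inclusion and (2) by combining Lemma~\ref{lem:properties of varphi}(3), which gives a neighborhood of $p_0$ inside $\mbox{\rm Im}\,(\varphi)$, with the closed-graph property of the multifunction $y\mapsto D^*u(y)$.

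For (1), let $p\in A^{\#}$ and set $y=x(p)$, so that $p\in D^+u(y)\setminus D^*u(y)$. If $D^+u(y)$ were a singleton, Proposition~\ref{basic_facts_of_superdifferential}(d) would give $\co D^*u(y)=D^+u(y)=\{p\}$, forcing $D^*u(y)=\{p\}$ and hence $p\in D^*u(y)$, a contradiction; so $y\in\Sigma(u)\cap U$. Conversely, for $y\in\Sigma(u)\cap U$, Lemma~\ref{lem:properties of minimal p}(4) says $p^{\#}(y)\in D^+u(y)\setminus D^*u(y)$, and injectivity of $\varphi$ (Lemma~\ref{lem:properties of varphi}(1)) forces $x(p^{\#}(y))=y$; hence $p^{\#}(y)\in A^{\#}$ and $y\in x(A^{\#})$.

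For (2), fix $p_0\in A^{\#}$ and set $x_0:=x(p_0)$. Lemma~\ref{lem:properties of varphi}(3) applied at $x_0$ yields $\delta_0>0$ with
\[
B_{\delta_0}(p_0)\;\subset\;D^+u(x_0)+B_{\delta_0}\;\subset\;\mbox{\rm Im}\,(\varphi),
\]
where the first inclusion uses $p_0\in D^+u(x_0)$. Every $p$ in this ball therefore lies in $D^+u(x(p))$, so to show $p\in A^{\#}$ for $p$ near $p_0$ it remains to exclude $p\in D^*u(x(p))$. I would argue this by contradiction: if $p_k\to p_0$ with $p_k\in B_{\delta_0}(p_0)$ and $p_k\in D^*u(x(p_k))$, then Lemma~\ref{lem:bi-Lip}(1) gives $x(p_k)\to x_0$, and by definition of $D^*u$ one can pick, for each $k$, a point $z_k$ at which $u$ is differentiable with $|z_k-x(p_k)|<1/k$ and $|Du(z_k)-p_k|<1/k$. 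Then $z_k\to x_0$ and $Du(z_k)\to p_0$, giving $p_0\in D^*u(x_0)$ and contradicting $p_0\in A^{\#}$.

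The only non-routine point is this diagonal extraction for $D^*u$, which is essentially the statement that the graph of $y\mapsto D^*u(y)$ is closed; everything else amounts to unraveling the definitions of $A^{\#}$, $\varphi$, and $p^{\#}$, and applying the lemmas already in place.
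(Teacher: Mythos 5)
Your proof of item (1) matches the paper in substance: you spell out explicitly why $p\in D^+u(y)\setminus D^*u(y)$ forces $D^+u(y)$ to be non-singleton (via $D^+u=\co D^*u$), and use Lemma~\ref{lem:properties of minimal p}(4) together with injectivity of $\varphi$ for the reverse inclusion; the paper invokes Lemma~\ref{lem:structure of D+u} and leaves the same steps implicit.

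For item (2) you take a genuinely different route, and it is correct. The paper rewrites $A^{\#}$ using the viscosity-solution identity $D^*u(x)=\{p\in D^+u(x):H(x,p,u(x))=0\}$ from Lemma~\ref{lem:structure of D+u}, so that $A^{\#}=\{p\in\mbox{\rm Im}\,(\varphi):H(x(p),p,u(x(p)))<0\}$ is a strict sublevel set of a continuous function on the open set $\mbox{\rm Im}\,(\varphi)$ (continuity of $p\mapsto x(p)$ coming from Lemma~\ref{lem:bi-Lip}), hence open. You instead argue by contradiction using the closed-graph property of $y\rightsquigarrow D^*u(y)$, which you prove directly by a diagonal extraction of differentiability points; this needs Lemma~\ref{lem:properties of varphi}(3) to guarantee the competing sequence stays in $\mbox{\rm Im}\,(\varphi)$ and Lemma~\ref{lem:bi-Lip}(1) to get $x(p_k)\to x_0$. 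Both arguments are sound. The paper's is shorter and leans on the fact that $u$ solves the equation, whereas yours only uses local semiconcavity (the closed graph of $D^*u$ holds for any locally Lipschitz function) and would transfer unchanged to settings where no Hamiltonian characterization of $D^*u$ is available.
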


\begin{proof}
\rm{(1)} follows directly from Lemma \ref{lem:structure of D+u}. For (2), Lemma \ref{lem:structure of D+u} implies
\begin{align*}
	A^{\#}=\{p\in\mbox{\rm Im}\,(\varphi):H(x(p),p,u(x(p)))<0\}.
\end{align*}
By Lemma \ref{lem:properties of varphi} \rm{(3)}, $\mbox{\rm Im}\,(\varphi)$ is open in $\R^n$, and Lemma \ref{lem:bi-Lip} shows $x(\cdot)$ is Lipschitz with respect to $p$. Therefore, $A^{\#}$ is open in $\R^n$.
\end{proof}

\subsection{Proof of the main result}
In this section, we always suppose $n=2$, that is, the case on the plane.

\begin{Lem}\label{lem:R2-1}
	Suppose $x_0\in\Sigma(u)\cap U$ and $v^{\#}(x_0)\neq 0$. Then there exists $p_1,p_2\in D^*u(x_0)$ such that $\epf(D^+u(x_0),v^{\#}(x_0))=[p_1,p_2]$ and $p^{\#}(x_0)\in(p_1,p_2)$.
\end{Lem}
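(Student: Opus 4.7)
The plan is to extract the geometric content: in $\R^2$, the exposed face $\epf(D^+u(x_0),v^{\#}(x_0))$ of the planar convex body $D^+u(x_0)$ cut out by a nonzero direction $v^{\#}(x_0)$ is at most one-dimensional, so it is either a vertex or a closed segment. The content of the lemma is then that it must be a nondegenerate segment, with $p^{\#}(x_0)$ in its relative interior.

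First I would record the inputs. By Proposition \ref{basic_facts_of_superdifferential}(a), $D^+u(x_0)$ is a nonempty compact convex subset of $\R^2$, and it is not a singleton because $x_0\in\Sigma(u)$. By Lemma \ref{lem:properties of minimal p}(2), $p^{\#}(x_0)\in\epf(D^+u(x_0),v^{\#}(x_0))$, and by Lemma \ref{lem:properties of minimal p}(4) combined with Lemma \ref{lem:structure of D+u}, $p^{\#}(x_0)\in D^+u(x_0)\setminus D^*u(x_0) = D^+u(x_0)\setminus\ext(D^+u(x_0))$.

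Next I would identify the face. Since the exposed face $F:=\epf(D^+u(x_0),v^{\#}(x_0))$ contains the non-extremal point $p^{\#}(x_0)$, Lemma \ref{lem:epf}(3) rules out $F$ being a singleton. On the other hand, $F$ is the intersection of the planar convex body $D^+u(x_0)$ with an affine supporting line (orthogonal to $v^{\#}(x_0)\neq 0$), hence a compact convex subset of a line, i.e.\ a closed segment $[p_1,p_2]$ with $p_1\neq p_2$. A standard two-line argument shows $p_1,p_2\in\ext(D^+u(x_0))=D^*u(x_0)$: if $p_1=\lambda q+(1-\lambda)q'$ with $q,q'\in D^+u(x_0)$ and $\lambda\in(0,1)$, pairing with $v^{\#}(x_0)$ forces $q,q'\in F=[p_1,p_2]$, and since $p_1$ is an endpoint of that segment one gets $q=q'=p_1$.

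Finally, $p^{\#}(x_0)\in[p_1,p_2]$ cannot coincide with either endpoint $p_1$ or $p_2$ (those are extreme, while $p^{\#}(x_0)$ is not), so $p^{\#}(x_0)\in(p_1,p_2)$, completing the proof. The only delicate point is step two—identifying $F$ as a segment and its endpoints as extreme points of the ambient set—but both are elementary in dimension $n=2$ and use nothing beyond the preceding lemmas.
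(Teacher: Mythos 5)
Your proof is correct and follows essentially the same route as the paper's: rule out the exposed face being a singleton via Lemma \ref{lem:epf}(3) and Lemma \ref{lem:properties of minimal p}(4), observe that in $\R^2$ the face is a segment $[p_1,p_2]$, show the endpoints are extreme and hence in $D^*u(x_0)$ by Lemma \ref{lem:structure of D+u}, and conclude $p^{\#}(x_0)$ lies strictly between them. The only cosmetic difference is that you identify the face directly as the slice of the convex body by a supporting line orthogonal to $v^{\#}(x_0)$, while the paper derives its one-dimensionality by a hands-on computation; both are equivalent and use the same underlying lemmas.
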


\begin{proof}
By Lemma \ref{lem:properties of minimal p} \rm{(2)}, we know $p^{\#}(x_0)\in\epf(D^+u(x_0),v^{\#}(x_0))$. If $\epf(D^+u(x_0),v^{\#}(x_0))=p^{\#}(x_0)$ is a singleton, Lemma \ref{lem:epf} \rm{(3)} implies $p^{\#}(x_0)\in\ext(D^+u(x_0))=D^*u(x_0)$, while Lemma \ref{lem:properties of minimal p} (4) shows $p^{\#}(x_0)\in D^+u(x_0)\setminus D^*u(x_0)$. This leads to a contradiction. So $\epf(D^+u(x_0),v^{\#}(x_0))$ is not a singleton, and we can fix $q_1,q_2\in\epf(D^+u(x_0),v^{\#}(x_0))$, $q_1\neq q_2$. It follows that $\langle q_2-q_1,v^{\#}(x_0) \rangle=0$, and for any $q\in\epf(D^+u(x_0),v^{\#}(x_0))$, we also have $\langle q-q_1,v^{\#}(x_0) \rangle=0$. Thus, there exists $\lambda\in\R$ such that $q-q_1=\lambda (q_2-q_1)$, that is, $q=q_1+\lambda(q_2-q_1)$. This implies $\epf(D^+u(x_0),v^{\#}(x_0))\subset\{q_1+\lambda(q_2-q_1):\lambda\in\R\}$, which is a line on $\R^2$. 

By Lemma \ref{lem:epf} \rm{(1)}, there exists $p_1,p_2\in D^+u(x_0)$ such that $\epf(D^+u(x_0),v^{\#}(x_0))=[p_1,p_2]$. Now we claim $p_i\in D^*u(x_0)$, $i=1,2$. In fact, if there exists $p_{1,1},p_{1,2}\in D^+u(x_0)$ such that $p_1\in(p_{1,1},p_{1,2})$, then by Lemma \ref{lem:epf} (2), we obtain $[p_{1,1},p_{1,2}]\in\epf(D^+u(x_0),v^{\#}(x_0))=[p_1,p_2]$, which leads to a contradiction. So we have $p_1\in \ext(D^+u(x_0))$, and similarly $p_2\in \ext(D^+u(x_0))$. Lemma \ref{lem:structure of D+u} shows $\ext(D^+u(x_0))=D^*u(x_0)$. It follows that $p_i\in D^*u(x_0)$, $i=1,2$. Finally, Lemma \ref{lem:properties of minimal p} \rm{(4)} implies $p^{\#}(x_0)\in(p_1,p_2)$.
\end{proof}

\begin{Lem}\label{lem:R2-2}
Under the assumption of Lemma \ref{lem:R2-1}, we suppose $p_0\in(p_1,p_2)$, $\theta_0\in S^1$ and $\langle \theta_0,v^{\#}(x_0) \rangle<0$. Let $\eta(t)=p_0+\theta_0 t$, $t\geqslant0$. Then we have
\begin{enumerate}[\rm (1)]
	\item $\lim_{t\to 0^+}\frac{x(\eta(t))-x_0}{|x(\eta(t))-x_0|}=\frac{v^{\#}(x_0)}{|v^{\#}(x_0)|}$.
	\item For any $\varepsilon>0$, there exists $t_{\varepsilon}>0$ such that
	\begin{align*}
		D^*u(x(\eta(t)))\subset\{p_1,p_2\}+B_{\varepsilon},\qquad \forall t\in(0,t_{\varepsilon}),\\
		D^*u(x(\eta(t)))\cap B_{\varepsilon}(p_i)\neq\emptyset,\ i=1,2,\qquad \forall  t\in(0,t_{\varepsilon}).
	\end{align*}
    \item $\lim_{t\to 0^+}p^{\#}(x(\eta(t)))=p^{\#}(x_0)$, $\lim_{t\to 0^+}v^{\#}(x(\eta(t)))=v^{\#}(x_0)$.
\end{enumerate}
\end{Lem}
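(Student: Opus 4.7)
The plan is to tackle the three parts in order, since (2) and (3) will lean on (1). I first observe that $\eta(t)=p_0+t\theta_0$ lies outside $D^+u(x_0)$ for all small $t>0$: since $[p_1,p_2]=\epf(D^+u(x_0),v^{\#}(x_0))$ is the minimum-face of $\langle\,\cdot\,,v^{\#}(x_0)\rangle$ on $D^+u(x_0)$ and $\langle\theta_0,v^{\#}(x_0)\rangle<0$, the value $\langle\eta(t),v^{\#}(x_0)\rangle$ drops strictly below the supporting hyperplane. In particular $x_t:=x(\eta(t))\neq x_0$, and $x_t\to x_0$ by the bi-Lipschitz estimate of Lemma \ref{lem:bi-Lip}, so the unit vector $\theta_t:=(x_t-x_0)/|x_t-x_0|$ is well-defined for small $t$, and by compactness of $S^1$ it suffices to show every convergent subsequence $\theta_{t_k}\to\theta^*$ has $\theta^*=v^{\#}(x_0)/|v^{\#}(x_0)|$.

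The crux of the argument, and where the main obstacle lies, is identifying the sign of $\theta^*$. Fix a subsequence with $\theta_{t_k}\to\theta^*\in S^1$. Applying Lemma \ref{lem:uniformly concave function}(2) with $p_k=\eta(t_k)\to p_0$ places $p_0\in\epf(D^+u(x_0),\theta^*)$; since $p_0\in(p_1,p_2)\subset D^+u(x_0)$, Lemma \ref{lem:epf}(2) forces $[p_1,p_2]\subset\epf(D^+u(x_0),\theta^*)$, whence $\langle p_2-p_1,\theta^*\rangle=0$. Combined with $\langle p_2-p_1,v^{\#}(x_0)\rangle=0$ (from the defining identity for $[p_1,p_2]$), in $\R^2$ this forces $\theta^*=\pm v^{\#}(x_0)/|v^{\#}(x_0)|$. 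To resolve the sign I would apply Lemma \ref{lem:uniformly concave function}(1) to the pair $(x_t,\eta(t))$ against $(x_0,p_1)$ and $(x_0,p_2)$ separately. Writing $p_0=(1-s)p_1+sp_2$ with $s\in(0,1)$ and taking the convex combination of the two resulting inequalities with weights $(1-s)$ and $s$, the $\langle p_2-p_1,x_t-x_0\rangle$ contributions cancel exactly, leaving
\begin{equation*}
t\,\langle\theta_0,x_t-x_0\rangle\leqslant-|x_t-x_0|^2.
\end{equation*}
Dividing by $t|x_t-x_0|$ and letting $t_k\to 0^+$ gives $\langle\theta_0,\theta^*\rangle\leqslant 0$, which together with the hypothesis $\langle\theta_0,v^{\#}(x_0)\rangle<0$ rules out $\theta^*=-v^{\#}(x_0)/|v^{\#}(x_0)|$ and completes (1).

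For (2), any cluster point $q$ of a sequence $q_k\in D^*u(x_{t_k})$ satisfies $H(x_0,q,u(x_0))=0$ by continuity of $H$ and lies in $D^+u(x_0)$ by upper semicontinuity, so Lemma \ref{lem:structure of D+u} gives $q\in D^*u(x_0)$. Invoking Lemma \ref{lem:uniformly concave function}(2) with $p_k=q_k\to q$ and the direction $v^{\#}(x_0)/|v^{\#}(x_0)|$ obtained in (1) further places $q\in\epf(D^+u(x_0),v^{\#}(x_0))=[p_1,p_2]$, hence $q\in[p_1,p_2]\cap\ext(D^+u(x_0))=\{p_1,p_2\}$. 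A standard contradiction argument yields the inclusion $D^*u(x_t)\subset\{p_1,p_2\}+B_\varepsilon$ for small $t$. For the non-emptiness of each $\varepsilon$-ball I would use $\eta(t)\in D^+u(x_t)=\co(D^{*}u(x_t))$ to write $\eta(t)$ as a convex combination of at most three points of $D^*u(x_t)$; passing to a subsequence, each point limits to $p_1$ or $p_2$, and since $\lim\eta(t)=p_0$ lies strictly between $p_1$ and $p_2$, both extreme points must carry positive limiting weight, forcing $D^*u(x_t)$ to meet both $B_\varepsilon(p_1)$ and $B_\varepsilon(p_2)$.

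For (3), let $p^{**}$ be any subsequential limit of $p^{\#}(x_{t_k})$; by upper semicontinuity $p^{**}\in D^+u(x_0)$. The cluster sequences from (2) supply $q_1(t),q_2(t)\in D^*u(x_t)$ with $q_i(t)\to p_i$, so every $q\in[p_1,p_2]$ is approximated by a convex combination $q(t)\in D^+u(x_t)$ with $q(t)\to q$. Passing to the limit in $H(x_t,p^{\#}(x_t),u(x_t))\leqslant H(x_t,q(t),u(x_t))$ shows that $p^{**}$ minimizes $H(x_0,\cdot,u(x_0))$ over $[p_1,p_2]$. Since $p^{\#}(x_0)\in[p_1,p_2]$ is already the unique minimizer over the larger set $D^+u(x_0)$ by strict convexity, $p^{**}=p^{\#}(x_0)$, and continuity of $H_p$ then delivers $v^{\#}(x_t)\to v^{\#}(x_0)$.
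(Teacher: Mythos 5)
Your proposal is correct and follows the same overall strategy as the paper's proof, but with several stylistic substitutions worth noting. In part (1) the identification of the limiting direction up to sign is identical (Lemma \ref{lem:uniformly concave function}(2) plus Lemma \ref{lem:epf}(2)); for the sign, the paper applies Lemma \ref{lem:uniformly concave function}(1) directly to the pair $(x(\eta(t)),\eta(t))$ and $(x_0,p_0)$, whereas you apply it to $(x_0,p_1)$ and $(x_0,p_2)$ separately and take the convex combination—this is a correct but slightly roundabout way of reaching the same inequality, since $p_0\in D^+u(x_0)$ can be used directly. In part (2) the paper proceeds in three quantitative $\varepsilon$-$\delta$ steps, first bounding $D^+u(x(\eta(t)))$ near $[p_1,p_2]$ via upper semicontinuity and the directional estimate from (1), then excluding the middle of the segment via the openness of the auxiliary set $A^{\#}$ (Lemma \ref{lem:prop of A}), and finally arguing non-emptiness by showing $D^+u=\co D^*u$ cannot fit inside a small ball. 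You replace the $A^{\#}$-openness argument with a compactness/subsequence argument using the characterization $D^*u(x)=\{p\in D^+u(x):H(x,p,u(x))=0\}$ (Lemma \ref{lem:structure of D+u}) and continuity of $H$; this is the same mechanism since the openness of $A^{\#}$ is itself derived from that characterization. Your Carath\'eodory decomposition for the non-emptiness is unnecessary—the simpler observation that $D^*u(x_t)\subset B_\varepsilon(p_2)$ already forces $D^+u(x_t)\subset \overline{B_\varepsilon(p_2)}$, contradicting $\eta(t)\in D^+u(x_t)$—but it is valid. In part (3) your subsequential-limit argument and appeal to the unique minimizer of $H(x_0,\cdot,u(x_0))$ on $D^+u(x_0)$ mirrors the paper's quantitative argument, with the same use of (2) to approximate $p^{\#}(x_0)$ by points of $D^+u(x_t)$. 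The trade-off is that the paper's quantitative statements are slightly more self-contained (and would yield explicit moduli), whereas your compactness arguments are shorter to state; both are fully rigorous.
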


\begin{proof}
By Lemma \ref{lem:properties of varphi} (3), there exists $t_1>0$ such that for all $t\in[0,t_1)$, there holds $\eta(t)\in\mbox{\rm Im}\,(\varphi)$, that is, $x(\eta(t))\in U$. Since $\langle \theta_0,v^{\#}(x_0) \rangle<0$, for any $t\in(0,t_1)$, we have
\begin{align*}
	\langle \eta(t),v^{\#}(x_0) \rangle=\langle p_0+\theta_0 t,v^{\#}(x_0) \rangle<\langle p_0,v^{\#}(x_0) \rangle.
\end{align*}
Noticing $p_0\in\epf(D^+u(x_0),v^{\#}(x_0))$, we know that $\eta(t)\notin D^+u(x_0)$, that is, $x(\eta(t))\neq x_0$, which implies $\frac{x(\eta(t))-x_0}{|x(\eta(t))-x_0|}\in S^1$. Now, set $\{t_i\}\subset(0,t_1)$ is a sequence satisfying
\begin{align*}
	\lim_{i\to+\infty}t_i=0,\qquad \lim_{i\to+\infty}\frac{x(\eta(t_i))-x_0}{|x(\eta(t_i))-x_0|}=\theta\in S^1.
\end{align*}
By Lemma \ref{lem:bi-Lip}, we have
\begin{align*}
	|x(\eta(t_i))-x_0|=|x(\eta(t_i))-x(\eta(0))|\leqslant|\eta(t_i)-\eta(0)|=t_i\to 0,\qquad i\to+\infty.
\end{align*}
Thus, due to Lemma \ref{lem:uniformly concave function} (2), there holds $p_0\in\epf(D^+u(x_0),\theta)$. Combing this with $p_0\in(p_1,p_2)$ and Lemma \ref{lem:epf} (2), we obtain $[p_1,p_2]\subset\epf(D^+u(x_0),\theta)$, which implies $\langle p_2-p_1,\theta \rangle=0$. Since $\langle p_2-p_1,v^{\#}(x_0) \rangle=0$ and $n=2$, we have $\theta=\pm\frac{v^{\#}(x_0)}{|v^{\#}(x_0)|}$. On the other hand, Lemma \ref{lem:uniformly concave function} (1) shows
\begin{align*}
	\langle \eta(t_i)-p_0,x(\eta(t_i))-x_0 \rangle \leqslant -|x(\eta(t_i))-x_0|^2.
\end{align*}
It follows that 
\begin{align*}
	\left\langle \theta_0,\frac{x(\eta(t_i))-x_0}{|x(\eta(t_i))-x_0|} \right\rangle\leqslant-\frac{1}{t_i}|x(\eta(t_i))-x_0|<0.
\end{align*}
By letting $i\to+\infty$, we have $\langle\theta_0,\theta\rangle\leqslant0$. Noticing $\langle \theta_0,-\frac{v^{\#}(x_0)}{|v^{\#}(x_0)|} \rangle>0$, we obtain $\theta=\frac{v^{\#}(x_0)}{|v^{\#}(x_0)|}$. The argument above shows that when $t\to0^+$, any convergent subsequence of $\frac{x(\eta(t))-x_0}{|x(\eta(t))-x_0|}$ must converge to $\frac{v^{\#}(x_0)}{|v^{\#}(x_0)|}$. Therefore, we have
\begin{align*}
	\lim_{t\to 0^+}\frac{x(\eta(t))-x_0}{|x(\eta(t))-x_0|}=\frac{v^{\#}(x_0)}{|v^{\#}(x_0)|}.
\end{align*}
This completes the proof of (1).

For the proof of (2), we divide into several steps.

\medskip

\noindent{\textbf{Step 1.}} There exists $0<t_2<t_1$ such that $D^+u(x(\eta(t)))\subset[p_1,p_2]+B_\varepsilon$ for all $t\in(0,t_2)$:\\
For any $\varepsilon>0$, there exists $0<\delta<\varepsilon$ such that
\begin{align*}
\left\langle p,\frac{v^{\#}(x_0)}{|v^{\#}(x_0)|} \right\rangle\geqslant \left\langle p_1,\frac{v^{\#}(x_0)}{|v^{\#}(x_0)|} \right\rangle+\delta,\qquad \forall p\in D^+u(x_0)\setminus([p_1,p_2]+B_{\frac{1}{2}\varepsilon}).
\end{align*}
Since $D^+u(\cdot)$ is upper-semicontinuous, there exists $0<t_{2,1}<t_1$ such that
\begin{align*}
    D^+u(x(\eta(t)))\subset D^+u(x_0)+B_{\frac{1}{2}\delta},\qquad \forall t\in(0,t_{2,1}).
\end{align*}
Now, if $t\in(0,t_{2,1})$ and $p\in D^+u(x(\eta(t)))\setminus([p_1,p_2]+B_\varepsilon)$, then
\begin{align*}
	p\in(D^+u(x_0)+B_{\frac{1}{2}\delta})\setminus([p_1,p_2]+B_{\frac{1}{2}\varepsilon}+B_{\frac{1}{2}\delta})\subset D^+u(x_0)\setminus([p_1,p_2]+B_{\frac{1}{2}\varepsilon})+B_{\frac{1}{2}\delta},
\end{align*}
which implies
\begin{equation}\label{eq:3-2 1}
	\left\langle p,\frac{v^{\#}(x_0)}{|v^{\#}(x_0)|}\right\rangle\geqslant\left\langle p_1,\frac{v^{\#}(x_0)}{|v^{\#}(x_0)|} \right\rangle+\delta-\frac{1}{2}\delta=\left\langle p_1,\frac{v^{\#}(x_0)}{|v^{\#}(x_0)|} \right\rangle+\frac{1}{2}\delta.
\end{equation}
On the other hand, for any $t\in(0,t_{2,1})$ and $p\in D^+u(x(\eta(t)))$, by Lemma \ref{lem:uniformly concave function} (2), we have $\langle p-p_1,x(\eta(t))-x_0\rangle\leqslant-|x(\eta(t))-x_0|^2$. It follows that $\langle p-p_1,\frac{x(\eta(t))-x_0}{|x(\eta(t))-x_0|}\rangle\leqslant-|x(\eta(t))-x_0|\leqslant0$,
\begin{align*}
	\left\langle p-p_1,\frac{v^{\#}(x_0)}{|v^{\#}(x_0)|}\right\rangle&=\left\langle p-p_1,\frac{x(\eta(t))-x_0}{|x(\eta(t))-x_0|}\right\rangle+\left\langle p-p_1,\frac{v^{\#}(x_0)}{|v^{\#}(x_0)|}-\frac{x(\eta(t))-x_0}{|x(\eta(t))-x_0|}\right\rangle\\
	&\leqslant\left\langle p-p_1,\frac{v^{\#}(x_0)}{|v^{\#}(x_0)|}-\frac{x(\eta(t))-x_0}{|x(\eta(t))-x_0|}\right\rangle.
\end{align*}
Notice that $|p-p_1|\leqslant \mbox{\rm diam}\,(D^+u(x_0))+\frac{1}{2}\delta$ is uniformly bounded. Due to the result in (1), there exits $0<t_2<t_{2,1}$ such that
\begin{equation}\label{eq:3-2 2}
	\left\langle p-p_1,\frac{v^{\#}(x_0)}{|v^{\#}(x_0)|}\right\rangle\leqslant \frac{1}{4}\delta,\qquad \forall t\in(0,t_2),p\in D^+u(x(\eta(t))).
\end{equation}
Combing \eqref{eq:3-2 1} and \eqref{eq:3-2 2}, we know that
\begin{align*}
	D^+u(x(\eta(t)))\subset[p_1,p_2]+B_\varepsilon,\qquad \forall t\in(0,t_2).
\end{align*}

\medskip

\noindent{\textbf{Step 2.}} There exists $0<t_3<t_2$ such that $D^*u(x(\eta(t)))\subset\{p_1,p_2\}+B_\varepsilon$ for all $t\in(0,t_3)$:\\
We denote $q_0=\frac{p_2-p_1}{|p_2-p_1|}$. By Lemma \ref{lem:structure of D+u}, we know that $[p_1+\frac{1}{2}\varepsilon q_0,p_2-\frac{1}{2}\varepsilon q_0]\subset A^{\#}$. Since $A^{\#}$ is open, there exists $\delta'<\frac{1}{2}\varepsilon$ such that $[p_1+\frac{1}{2}\varepsilon q_0,p_2-\frac{1}{2}\varepsilon q_0]+B_{\delta'}\subset A^{\#}$. By Step.1, there exists $0<t_3<t_2$ such that for all $t\in(0,t_3)$, $D^+u(x(\eta(t)))\subset[p_1,p_2]+B_{\delta'}$, which implies
\begin{align*}
	D^*u(x(\eta(t)))&\subset([p_1,p_2]+B_{\delta'})\setminus([p_1+\frac{1}{2}\varepsilon q_0,p_2-\frac{1}{2}\varepsilon q_0]+B_{\delta'})\\
	&\subset([p_1,p_1+\frac{1}{2}\varepsilon q_0)\cup(p_2-\frac{1}{2}\varepsilon q_0,p_2])+B_{\delta'}\subset\{p_1,p_2\}+B_\varepsilon.
\end{align*}

\medskip

\noindent{\textbf{Step 3.}} There exists $0<t_\varepsilon<t_3$ such that $D^*u(x(\eta(t)))\cap B_\varepsilon(p_i)\neq\emptyset$, $i=1,2$ for all $t\in(0,t_\varepsilon)$:
Without loss of generality, we assume $\varepsilon<\frac{1}{3}\mbox{\rm dist}\,(\{p_1,p_2\},p_0)$. Let $t_\varepsilon=\min\{t_3,\varepsilon\}$. Now, we have
\begin{align*}
	|\eta(t)-p_i|=|p_0+\theta_0 t-p_i|\geqslant|p_0-p_i|-t\geqslant3\varepsilon-\varepsilon=2\varepsilon,\ i=1,2,\qquad \forall t\in(0,t_\varepsilon).
\end{align*}
For any $t\in(0,t_\varepsilon)$, if $D^*u(x(\eta(t)))\cap B_\varepsilon(p_1)=\emptyset$, by Step.2,  $D^*u(x(\eta(t)))\subset B_\varepsilon(p_2)$, which implies $D^+u(x(\eta(t)))=\mbox{\rm co}\,(D^*u(x(\eta(t))))\subset B_\varepsilon(p_2)$. This leads to a contradiction with $|\eta(t)-p_2|\geqslant 2\varepsilon$. Thus, we have $D^*u(x(\eta(t)))\cap B_\varepsilon(p_1)\neq\emptyset$, and similarly $D^*u(x(\eta(t)))\cap B_\varepsilon(p_2)\neq\emptyset$. In conclusion, for all $t\in(0,t_\varepsilon)$ we have
\begin{align*}
 	D^*u(x(\eta(t)))\subset\{p_1,p_2\}+B_{\varepsilon},\qquad D^*u(x(\eta(t)))\cap B_{\varepsilon}(p_i)\neq\emptyset,\ i=1,2.
\end{align*}
This completes the proof of (2).

Finally, we turn to the proof of (3). For any $\varepsilon>0$, since $H$ is of class $C^1$ and strictly convex in $p$, there exists $\delta>0$ such that
\begin{align*}
	\min_{p\in D^+u(x_0)\setminus B_{\frac{1}{2}\varepsilon}(p^{\#}(x_0))}H(x_0,p,u(x_0))\geqslant H(x_0,p^{\#}(x_0),u(x_0))+\delta,
\end{align*}
and there exists $\varepsilon'<\frac{1}{4}\varepsilon$ such that 
\begin{equation}\label{eq:3-2 3}
H(x_0,p,u(x_0))\leqslant H(x_0,p^{\#}(x_0),u(x_0))+\frac{1}{3}\delta,\qquad \forall p\in B_{\varepsilon'}(p^{\#}(x_0)),
\end{equation}
\begin{equation}\label{eq:3-2 4}
H(x_0,p,u(x_0))\geqslant H(x_0,p^{\#}(x_0),u(x_0))+\frac{2}{3}\delta,\qquad \forall p\in(D^+u(x_0)\setminus B_{\frac{1}{2}\varepsilon}(p^{\#}(x_0)))+B_{\varepsilon'}.	
\end{equation}
By  (2), there exists $t_{\varepsilon'}>0$ such that for any $t\in(0,t_{\varepsilon'})$, we have
\begin{equation}\label{eq:3-2 5}
	D^+u(x(\eta(t)))\subset[p_1,p_2]+B_{\varepsilon'}\subset D^+u(x_0)+B_{\varepsilon'},
\end{equation}
and we can choose
\begin{align*}
	q_i(t)\in D^*u(x(\eta(t)))\cap B_{\varepsilon'}(p_i),\qquad i=1,2.
\end{align*}
Set $p^{\#}(x_0)=\lambda p_1+(1-\lambda)p_2$, where $\lambda\in(0,1)$. Then for $t\in(0,t_{\varepsilon'})$, we have
\begin{align*}
&|(\lambda q_1(t)+(1-\lambda)q_2(t))-p^{\#}(x_0)|=|(\lambda q_1(t)+(1-\lambda)q_2(t))-(\lambda q_1+(1-\lambda)q_2)|\\
\leqslant&\lambda|q_1(t)-q_1|+(1-\lambda)|q_2(t)-q_2|<\lambda\varepsilon'+(1-\lambda)\varepsilon'=\varepsilon',
\end{align*}
which implies $D^+u(x(\eta(t)))\cap B_{\varepsilon'}(p^{\#}(x_0))\neq\emptyset$. Combing this with \eqref{eq:3-2 3}, \eqref{eq:3-2 4} and \eqref{eq:3-2 5}, we know that
\begin{align*}
    p^{\#}(x(\eta(t)))&\in(D^+u(x_0)+B_{\varepsilon'})\setminus((D^+u(x_0)\setminus B_{\frac{1}{2}\varepsilon}(p^{\#}(x_0)))+B_{\varepsilon'})\\
    &\subset B_{\frac{1}{2}\varepsilon}(p^{\#}(x_0))+B_{\varepsilon'}\subset B_{\varepsilon}(p^{\#}(x_0)),\qquad \forall t\in(0,t_{\varepsilon'}).
\end{align*}
Therefore, we conclude that $\lim_{t\to 0^+}p^{\#}(x(\eta(t)))=p^{\#}(x_0)$ and
\begin{align*}
	\lim_{t\to 0^+}v^{\#}(x(\eta(t)))=\lim_{t\to 0^+}H_p(x(\eta(t)),p^{\#}(x(\eta(t))),u(x(\eta(t))))=H_p(x_0,p^{\#}(x_0),u(x_0))=v^{\#}(x_0).
\end{align*}
This completes our proof.
\end{proof}

\begin{proof}[Proof of theorem \ref{thm:existence}]
We divide the proof into several steps.

\medskip

\noindent{\textbf{Step 1.}} Set $x_0\in\Sigma(u)\cap U$. If $v^{\#}(x_0)=0$, then the curve $\gamma(t)\equiv x_0$, $t\in[0,+\infty)$ satisfies all the required properties. Next, we consider the case $v^{\#}(x_0)\neq0$. By Lemma \ref{lem:R2-1}, there exists $p_1,p_2\in D^*u(x_0)$ such that $\epf(D^+u(x_0),v^{\#}(x_0))=[p_1,p_2]$ and $p^{\#}(x_0)\in(p_1,p_2)$. Let $\eta(t)=p^{\#}(x_0)-\frac{v^{\#}(x_0)}{|v^{\#}(x_0)|}t$, $t\in\R$. Due to Lemma \ref{lem:properties of varphi} (3), there exists $t_1>0$ such that $\eta(t)\in\mbox{\rm Im}\,(\varphi)$ for all $t\in[0,t_1]$. Thus, we can define $\gamma_1(t)=x(\eta(t))$, $t\in[0,t_1]$. Obviously, $\gamma_1(0)=x_0$, and by Lemma \ref{lem:bi-Lip}, $\gamma_1=x\circ\eta$ is a Lipschitz curve. Since $p^{\#}(x_0)\in D^+u(x_0)\setminus D^*u(x_0)\subset A^{\#}$, Lemma \ref{lem:prop of A} shows there exists $0<t_2<t_1$ such that $\gamma_1(t)\in\Sigma(u)\cap U$ for all $t\in[0,t_2]$. Noticing $\langle-\frac{v^{\#}(x_0)}{|v^{\#}(x_0)|},v^{\#}(x_0) \rangle=-|v^{\#}(x_0)|<0$, by Lemma \ref{lem:R2-2} (3), there exists $0<t_3<t_2$ such that
	\begin{equation}\label{eq:3-3 1}
		\left\langle-\frac{v^{\#}(x_0)}{|v^{\#}(x_0)|},v^{\#}(\gamma_1(t)) \right\rangle\leqslant-\frac{1}{2}|v^{\#}(x_0)|,\qquad \forall t\in[0,t_3].
	\end{equation}
Using Lemma \ref{lem:R2-1} again, we know that for any $t\in[0,t_3]$, there exists $p_1(t),p_2(t)\in D^*u(\gamma_1(t))$ such that $\epf(D^+u(\gamma_1(t)),v^{\#}(\gamma_1(t)))=[p_1(t),p_2(t)]$.

\medskip

\noindent{\textbf{Step 2.}} Claim that there exists $0<t_4<t_3$ such that the curve $\gamma_1:[0,t_4]\to\R^2$ satisfies
\begin{equation}\label{eq:3-3 2}
	\eta(\tau_2(t))\in(p_1(t),p_2(t)),\qquad \forall t\in[0,t_4],\tau_2(t)<t_4,
\end{equation}
where $\tau_2(t)$ is defined in Appendix with respect to $\gamma_1:[0,t_4]\to\R^2$.

Since $p^{\#}(x_0)\in(p_1,p_2)$, we can set $\mbox{\rm dist}\,(\{p_1,p_2\},p^{\#}(x_0))=\delta_1>0$, and $\langle p_1-p^{\#}(x_0),p_2-p^{\#}(x_0)\rangle=-\delta_2<0$. It is easy to see that there exists $0<\varepsilon_1<\frac{1}{3}\delta_1$ such that
\begin{equation}\label{eq:3-3 3}
\langle q_1-q_0,q_2-q_0\rangle\leqslant-\frac{1}{2}\delta_2,\qquad \forall q_0\in B_{\varepsilon_1}(p^{\#}(x_0)), q_i\in B_{\varepsilon_1}(p_i), i=1,2.
\end{equation} 
Let $\varepsilon_2=\min\{\varepsilon_1,\frac{1}{2}|v^{\#}(x_0)|\varepsilon_1\}$. Then by Lemma \ref{lem:R2-2} (2) (3), there exists $0<t_4<t_3$ such that
\begin{align*}
p^{\#}(\gamma_1(t))\in B_{\frac{1}{3}\delta_1}(p^{\#}(x_0)),\ D^*u(\gamma_1(t))\subset\{p_1,p_2\}+B_{\varepsilon_2},\qquad \forall t\in[0,t_4].
\end{align*}
Now, consider any $t\in[0,t_4]$, $\tau_2(t)<t_4$. First, we show that $p_1(t)\in B_{\varepsilon_2}(p_1)$ and $p_2(t)\in B_{\varepsilon_2}(p_2)$. Otherwise, if $p_1(t),p_2(t)\in B_{\varepsilon_2}(p_1)$, for example. Then $p^{\#}(\gamma_1(t))\in[p_1(t),p_2(t)]\subset B_{\varepsilon_2}(p_1)$, that is, $|p^{\#}(\gamma_1(t))-p_1|<\varepsilon_2\leqslant\frac{1}{3}\delta_1$. On the other hand,
\begin{align*}
|p^{\#}(\gamma_1(t))-p_1|\geqslant |p_1-p^{\#}(x_0)|-|p^{\#}(x_0)-p^{\#}(\gamma_1(t))|\geqslant \delta_1-\frac{1}{3}\delta_1=\frac{2}{3}\delta_1.
\end{align*}
This leads to a contradiction. So we have $p_1(t)\in B_{\varepsilon_2}(p_1)$, $p_2(t)\in B_{\varepsilon_2}(p_2)$. Since $p^{\#}(x_0)\in(p_1,p_2)$, there exists $\lambda\in(0,1)$ such that $p^{\#}(x_0)=\lambda p_1+(1-\lambda)p_2$. It follows that
\begin{align*}
	\lambda p_1(t)+(1-\lambda)p_2(t)&\in(p_1(t),p_2(t))\\
	|\lambda p_1(t)+(1-\lambda)p_2(t)-p^{\#}(x_0)|&\leqslant \lambda|p_1(t)-p_1|+(1-\lambda)|p_2(t)-p_2|\leqslant \lambda\varepsilon_2+(1-\lambda)\varepsilon_2=\varepsilon_2,
\end{align*}
which implies $\mbox{\rm dist}\,\{[p_1(t),p_2(t)],p^{\#}(x_0)\}\leqslant\varepsilon_2$. Using \eqref{eq:3-3 1}, there exists a unique $t'\in\R$, $|t'|\leqslant\frac{2\varepsilon_2}{|v^{\#}(x_0)|}$ such that $\eta(t')$ is on the line containing $[p_1(t),p_2(t)]$. Notice also that
\begin{equation}\label{eq:3-3 4}
    |p_i(t)-p_i|<\varepsilon_2\leqslant\varepsilon_1,\ |\eta(t')-p^{\#}(x_0)|=|t'|\leqslant\frac{2\varepsilon_2}{|v^{\#}(x_0)|}\leqslant\varepsilon_1.
\end{equation}
Combing \eqref{eq:3-3 3} and \eqref{eq:3-3 4}, we obtain $\langle p_1(t)-\eta(t'),p_2(t)-\eta(t')\rangle\leqslant-\frac{1}{2}\delta_2<0$. So we have $\eta(t')\in(p_1(t),p_2(t))$. Finally, because $D^+u(\gamma_1(t))$ is convex and $\tau_2(t)<t_4$, there holds $\tau_2(t)=\max\{\tau\in\R:\eta(\tau)\in D^+u(\gamma_1(t))\}$. If $t'<\tau_2(t)$, then
\begin{align*}
	\langle \eta(\tau_2(t))-\eta(t'),v^{\#}(\gamma_1(t))\rangle=(\tau_2(t)-t')\left\langle-\frac{v^{\#}(x_0)}{|v^{\#}(x_0)|},v^{\#}(\gamma_1(t))\right\rangle\leqslant-\frac{1}{2}(\tau_2(t)-t')|v^{\#}(x_0)|<0.
\end{align*}
This leads to a contradiction with $\eta(t')\in[p_1(t),p_2(t)]=\epf(D^+u(\gamma_1(t)),v^{\#}(\gamma_1(t)))$. Therefore, we conclude that $t'=\tau_2(t)$, and as a result, $\eta(\tau_2(t))\in(p_1(t),p_2(t))$.

\medskip

\noindent{\textbf{Step 3.}} Since $p^{\#}(x_0)\in\epf(D^+u(x_0),v^{\#}(x_0))$, we obtain that $\gamma_1(t)\neq x_0$ for all $t>0$. Thus, we have $l(0,t_4)\geqslant \gamma_1(t_4)-x_0>0$, where $l(\cdot,\cdot)$ is defined in Appendix with respect to $\gamma_1:[0,t_4]\to\R^2$. Now, we define the curve $\gamma_2:[0,l(0,t_4)]\to\R^2$, $\gamma_2(s)=\gamma_1(t)$, where $l(0,t)=s$. Obviously, $\gamma_2(0)=\gamma_1(0)=x_0$, $\gamma_2(s)=\gamma_1(t)\in\Sigma(u)\cap U$ for all $s\in[0,l(0,t_4)]$, and by Lemma \ref{lem:v=1}, $\gamma_2$ is Lipschitz. For any $t\in[0,t_4]$, $\tau_2(t)<t_4$, using \eqref{eq:3-3 1}, \eqref{eq:3-3 2} and Lemma \ref{lem:R2-2} (1) (3), we have
\begin{align*}
	\lim_{r\to\tau_2(t)^+}\frac{\gamma_1(r)-\gamma_1(t)}{|\gamma_1(r)-\gamma_1(t)|}=\frac{v^{\#}(\gamma_1(t))}{|v^{\#}(\gamma_1(t))|},\qquad \lim_{r\to\tau_2(t)^+}v^{\#}(\gamma_1(r))=v^{\#}(\gamma_1(t)).
\end{align*}
Therefore, due to Lemma \ref{lem:length rescaling}, $\gamma_2$ satisfies
\begin{align*}
	\dot{\gamma}_2^+(s)=\frac{v^{\#}(\gamma_2(s))}{|v^{\#}(\gamma_2(s))|},\qquad \lim_{r\to s^+}v^{\#}(\gamma_2(r))=v^{\#}(\gamma_2(s)),\qquad \forall s\in[0,l(0,t_4)).
\end{align*}
Since $v^{\#}(\gamma_2(\cdot))$ is right-continuous, there exists $0<a<l(0,t_4)$ and $0<b_1\leqslant b_2<+\infty$ such that
\begin{align*}
	b_1\leqslant|v^{\#}(\gamma_2(s))|\leqslant b_2,\qquad \forall s\in[0,a].
\end{align*}
Now, by Lemma \ref{lem:ode for right d}, there exists a Lipschitz function $\tau:[0,\frac{a}{b_2}]\to[0,a]$ such that
\begin{align*}
\begin{cases}
	\dot{\tau}^+(s)=|v^{\#}(\gamma_2(\tau(s)))|,\qquad s\in[0,\frac{a}{b_2}),\\
	\tau(0)=0.
\end{cases}
\end{align*}

\medskip

\noindent{\textbf{Step 4.}} Finally, let $\gamma:[0,\frac{a}{b_2}]\to\R^2$, $\gamma(s)=\gamma_2(\tau(s))$. Obviously, $\gamma(0)=\gamma_2(0)=x_0$, $\gamma(s)\in\Sigma(u)\cap U$ for all $s\in[0,\frac{a}{b_2}]$, and $\gamma=\gamma_2\circ\tau$ is a Lipschitz curve. For any $s\in[0,\frac{a}{b_2})$, noticing that $b_1\leqslant\dot{\tau}^+(s)\leqslant b_2$ and $v^{\#}(\gamma_2(\cdot))$ is right-continuous, we have
\begin{align*}
	&\dot{\gamma}^+(s)=\dot{\gamma}_2^+(\tau(s))\cdot\dot{\tau}^+(s)=\frac{v^{\#}(\gamma_2(\tau(s)))}{|v^{\#}(\gamma_2(\tau(s)))|}\cdot|v^{\#}(\gamma_2(\tau(s)))|=v^{\#}(\gamma_2(\tau(s)))=v^{\#}(\gamma(s)),\\
	&\lim_{r\to s^+}v^{\#}(\gamma(r))=\lim_{r\to s^+}v^{\#}(\gamma_2(\tau(r)))=v^{\#}(\gamma_2(\lim_{r\to s^+}\tau(r)))=v^{\#}(\gamma_2(\tau(s)))=v^{\#}(\gamma(s)).
\end{align*}
Therefore, $\gamma:[0,\frac{a}{b_2}]\to\R^2$ satisfies all the required properties. This completes our proof.
\end{proof}

\appendix

\section{rescaling of a lipschitz curve}
In this section, we suppose $\tau_0>0$, $\gamma:[0,\tau_0]\to\R^n$ is a Lipschitz curve with Lipschitz constant $\kappa$. For $t\in[0,\tau_0]$, we define
\begin{align*}
	\tau_1(t)&=\inf\{\tau\in[0,t]:\gamma(r)=\gamma(t),\ \forall r\in[\tau,t]\},\\
	\tau_2(t)&=\sup\{\tau\in[t,\tau_0]:\gamma(r)=\gamma(t),\ \forall r\in[t,\tau]\}.
\end{align*}
Obviously, we have $0\leqslant \tau_1(t)\leqslant t\leqslant \tau_2(t)\leqslant\tau_0$. For $0\leqslant t_1\leqslant t_2\leqslant \tau_0$, we define the length of $\gamma\Big\vert_{[t_1,t_2]}$ as follows
\begin{align*}
	l(t_1,t_2)=\sup\{\sum_{i=1}^{k}|\gamma(r_i)-\gamma(r_{i-1})|:k\in\N,\ t_1=r_0\leqslant r_1\leqslant\cdots\leqslant r_k=t_2\}.
\end{align*}
We can easily obtain $0\leqslant l(t_1,t_2)\leqslant\kappa(t_2-t_1)$.

\begin{Lem}\label{lem:tau and l}
\hfill
\begin{enumerate}[\rm (1)]
	\item For any $t_1,t_2\in[0,\tau_0]$, we have
	\begin{align*}
		[\tau_1(t_1),\tau_2(t_1)]\cap[\tau_1(t_2),\tau_2(t_2)]\neq\emptyset\Longleftrightarrow[\tau_1(t_1),\tau_2(t_1)]=[\tau_1(t_2),\tau_2(t_2)]
	\end{align*}
	\item $l(t_1,t_3)=l(t_1,t_2)+l(t_2,t_3)$, $\forall 0\leqslant t_1\leqslant t_2\leqslant t_3\leqslant\tau_0$.
	\item For any $t_1,t_2\in[0,\tau_0]$, we have
	\begin{align*}
		l(0,t_1)=l(0,t_2)\Longleftrightarrow[\tau_1(t_1),\tau_2(t_1)]=[\tau_1(t_2),\tau_2(t_2)].
	\end{align*}
    \item If $t,t_i\in[0,\tau_0]$ satisfies $l(0,t_i)>l(0,t)$, $i\in\N$ and $\lim_{i\to\infty}l(0,t_i)=l(0,t)$, then
    \begin{align*}
    	t_i>\tau_2(t),\ \forall i\in\N,\qquad \lim_{i\to\infty}t_i=\tau_2(t).
    \end{align*}
\end{enumerate}
\end{Lem}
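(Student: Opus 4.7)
\medskip

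My plan is to first establish a structural interpretation of $[\tau_1(t),\tau_2(t)]$, then derive (1)--(4) in order from it together with standard facts about total variation. The preliminary observation is that $[\tau_1(t),\tau_2(t)]$ coincides with the maximal closed interval $I\ni t$ on which $\gamma$ is constant $\equiv \gamma(t)$. That $\gamma\equiv\gamma(t)$ on $[\tau_1(t),\tau_2(t)]$ itself follows from continuity of $\gamma$: the set $S_t=\{\tau\in[0,t]:\gamma\equiv\gamma(t)\text{ on }[\tau,t]\}$ is of the form $[\inf S_t,t]$ (if $\tau\in S_t$, every $\tau'\in[\tau,t]$ is), and the infimum lies in $S_t$ because $\gamma(\tau_1(t))=\lim_{r\to\tau_1(t)^+}\gamma(r)=\gamma(t)$; the symmetric argument works at $\tau_2(t)$. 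Maximality is immediate: any constant interval through $t$ has value $\gamma(t)$ and is automatically contained in $S_t$ on the left and its analogue on the right.

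For (1), the implication $\Leftarrow$ is trivial. For $\Rightarrow$, pick $s$ in the intersection; then $\gamma(s)=\gamma(t_1)=\gamma(t_2)$, and the union $[\tau_1(t_1),\tau_2(t_1)]\cup[\tau_1(t_2),\tau_2(t_2)]$ is a single closed interval containing both $t_1,t_2$ on which $\gamma$ is constant, so by the maximality property it is contained in (hence equal to) each of the two intervals. For (2), this is the standard additivity of total variation: inserting $t_2$ into a partition of $[t_1,t_3]$ does not decrease its sum (triangle inequality), giving $\le$; concatenating arbitrary partitions of $[t_1,t_2]$ and $[t_2,t_3]$ gives $\ge$.

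For (3), the implication $\Leftarrow$ uses (2) plus the fact that $\gamma$ constant on a subinterval forces $l=0$ there (every partition sum is $0$); so if $t_1,t_2$ share the same maximal constant interval $[a,b]$, both $l(0,t_1)$ and $l(0,t_2)$ equal $l(0,a)$ by additivity. For $\Rightarrow$, assume $t_1\le t_2$ with $l(0,t_1)=l(0,t_2)$; additivity gives $l(t_1,t_2)=0$; for any $r\in[t_1,t_2]$ the partition $t_1<r<t_2$ has sum $|\gamma(r)-\gamma(t_1)|+|\gamma(t_2)-\gamma(r)|\le 0$, so $\gamma$ is constant on $[t_1,t_2]$. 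This interval is then contained in the maximal constant interval through $t_1$ and through $t_2$, and (1) forces these to coincide.

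For (4), monotonicity of $s\mapsto l(0,s)$ (from (2)) together with $l(0,\tau_2(t))=l(0,t)$ (from (3) applied to $t$ and $\tau_2(t)$) shows that $l(0,t_i)>l(0,t)=l(0,\tau_2(t))$ forces $t_i>\tau_2(t)$. For the convergence $t_i\to\tau_2(t)$, argue by contradiction: if a subsequence satisfies $t_{i_k}\ge\tau_2(t)+\varepsilon$ for some $\varepsilon>0$, then $l(\tau_2(t),\tau_2(t)+\varepsilon)>0$ (otherwise $\gamma$ would be constant on $[\tau_1(t),\tau_2(t)+\varepsilon]$, contradicting the maximality of $\tau_2(t)$), so by monotonicity $l(0,t_{i_k})\ge l(0,\tau_2(t)+\varepsilon)>l(0,t)$ uniformly in $k$, contradicting $l(0,t_i)\to l(0,t)$. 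The only subtle point is this last strict comparison, where the maximality characterization of $\tau_2(t)$ has to be invoked directly rather than pulled from a general monotonicity statement; everything else is either definition-chasing or the standard length calculus.
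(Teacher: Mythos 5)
Your proof is correct and follows essentially the same route as the paper: the paper treats (1)--(2) as immediate, proves (3) by the same chain (additivity gives $l(t_1,t_2)=0$, hence $\gamma$ is constant on $[t_1,t_2]$, then apply (1)), and proves (4) by the identical monotonicity-plus-subsequence contradiction using $l(0,\tau_2(t))=l(0,t)$. Your explicit characterization of $[\tau_1(t),\tau_2(t)]$ as the maximal constant interval merely fills in what the paper calls ``immediate,'' and your direct use of maximality to get the strict inequality in (4) is equivalent to the paper's appeal to (3).
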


\begin{proof}
The proofs of (1) and (2) are immediate. To prove (3), without loss of generality, we assume $t_1\leqslant t_2$. Then
\begin{align*}
	l(0,t_1)=l(0,t_2) &\Longleftrightarrow l(t_1,t_2)=0 \Longleftrightarrow \gamma(t)=\gamma(t_1),\ \forall t\in[t_1,t_2]\\
	&\Longleftrightarrow t_2\in [\tau_1(t_1),\tau_2(t_1)] \Longleftrightarrow [\tau_1(t_1),\tau_2(t_1)]=[\tau_1(t_2),\tau_2(t_2)],
\end{align*}
where the first equivalence is from \rm{(2)} and the last is from \rm{(1)}.

Finally, it follows from \rm{(3)} that $l(0,\tau_2(t))=l(0,t)<l(0,t_i)$, $i\in\N$, which implies $t_i>\tau_2(t)$, $i\in\N$. If there exists $\tilde{t}>\tau_2(t)$ and a subsequence $\{t_{i_k}\}$ of $\{t_i\}$ such that $t_{i_k}\geqslant \tilde{t}$, $\forall k\in\N$, then by \rm{(3)} we have $l(t,\tilde{t})=l(0,\tilde{t})-l(0,t)>0$ and
\begin{align*}
l(0,t_{i_k})\geqslant l(0,\tilde{t})=l(0,t)+l(t,\tilde{t}),\qquad \forall k\in\N,
\end{align*}
which leads to a contradiction to $\lim_{i\to\infty}l(0,t_i)=l(0,t)$. Therefore, we have $\lim_{i\to\infty}t_i=\tau_2(t)$. This completes the proof of (4).
\end{proof}

Now, we define a curve $\gamma_l:[0,l(0,\tau_0)]\to\R^n$, $\gamma_l(s)=\gamma(t)$, where $t\in[0,\tau_0]$ satisfies $l(0,t)=s$. Due to Lemma \ref{lem:tau and l} \rm{(3)}, the curve $\gamma_l$ is well defined.

\begin{Lem}\label{lem:v=1}
$\gamma_l$ is a Lipschitz curve with constant 1 and satisfies
\begin{equation}\label{app:1}
	|\dot{\gamma}_l(s)|=1,\qquad a.e.\ s\in[0,l(0,\tau_0)].
\end{equation}
\end{Lem}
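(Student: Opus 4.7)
The plan is to establish the Lipschitz constant directly from the length identity in Lemma~\ref{lem:tau and l}(2), and then use the well-known fact that a Lipschitz curve has length equal to the integral of the norm of its derivative in order to deduce $|\dot{\gamma}_l(s)|=1$ almost everywhere.

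First I would verify that $\gamma_l$ is Lipschitz with constant $1$. Given $0\leqslant s_1\leqslant s_2\leqslant l(0,\tau_0)$, by definition there exist $t_1,t_2\in[0,\tau_0]$ with $l(0,t_i)=s_i$. Using Lemma~\ref{lem:tau and l}(3), the values $\gamma_l(s_i)=\gamma(t_i)$ are independent of the choice of $t_i$, so I may pick the representatives so that $t_1\leqslant t_2$. Then Lemma~\ref{lem:tau and l}(2) gives
\[
|\gamma_l(s_2)-\gamma_l(s_1)|=|\gamma(t_2)-\gamma(t_1)|\leqslant l(t_1,t_2)=l(0,t_2)-l(0,t_1)=s_2-s_1,
\]
proving the Lipschitz estimate with constant $1$.

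Next, to obtain \eqref{app:1}, I would compute the length of $\gamma_l$ on any subinterval $[0,s]$ and show it equals $s$. For any partition $0=s_0\leqslant\cdots\leqslant s_k=s$, choose monotone representatives $0=t_0\leqslant\cdots\leqslant t_k$ with $l(0,t_j)=s_j$; telescoping as above and using Lemma~\ref{lem:tau and l}(2) yields $\sum_j|\gamma_l(s_j)-\gamma_l(s_{j-1})|\leqslant s$. For the reverse inequality, given $\varepsilon>0$, pick a partition $0=t_0\leqslant\cdots\leqslant t_k$ of $[0,t]$ (with $l(0,t)=s$) such that $\sum_j|\gamma(t_j)-\gamma(t_{j-1})|\geqslant s-\varepsilon$; then the induced partition $s_j=l(0,t_j)$ of $[0,s]$ produces the same sum for $\gamma_l$. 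Hence the length of $\gamma_l\big|_{[0,s]}$ equals $s$, and by additivity the length on $[s_1,s_2]$ is $s_2-s_1$.

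Finally, since $\gamma_l$ is Lipschitz, it is absolutely continuous, and the standard identity
\[
\text{length of }\gamma_l\big|_{[s_1,s_2]}=\int_{s_1}^{s_2}|\dot{\gamma}_l(s)|\,ds
\]
gives $\int_{s_1}^{s_2}|\dot\gamma_l(s)|\,ds=s_2-s_1$ for every $0\leqslant s_1\leqslant s_2\leqslant l(0,\tau_0)$. Combined with $|\dot\gamma_l(s)|\leqslant 1$ a.e.\ (from the Lipschitz bound), this forces $|\dot\gamma_l(s)|=1$ almost everywhere. The only mildly delicate point is handling the many-to-one nature of $t\mapsto l(0,t)$ when selecting partition representatives, but Lemma~\ref{lem:tau and l}(1)(3) makes this transparent; the rest is essentially bookkeeping.
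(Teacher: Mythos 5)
Your proposal is correct and takes essentially the same route as the paper: the $1$-Lipschitz estimate is identical, and your computation of the length of $\gamma_l$ followed by the absolute-continuity identity $\text{length}=\int|\dot\gamma_l|$ is a mild repackaging of the paper's direct partition argument, which bounds $\sum_i|\gamma(t_i)-\gamma(t_{i-1})|$ by $\int_0^{l(0,\tau_0)}|\dot\gamma_l(s)|\,ds$ via the fundamental theorem of calculus and then takes the supremum to get $l(0,\tau_0)\leqslant\int_0^{l(0,\tau_0)}|\dot\gamma_l|$.
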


\begin{proof}
For any $0\leqslant s_1\leqslant s_2\leqslant l(0,\tau_0)$, there holds
\begin{align*}
|\gamma_l(s_2)-\gamma_l(s_1)|=|\gamma(t_2)-\gamma(t_1)|\leqslant l(t_1,t_2)=l(0,t_2)-l(0,t_1)=s_2-s_1,
\end{align*}
where $t_i\in[0,\tau_0]$ satisfies $l(0,t_i)=s_i$, $i=1,2$. Thus, $\gamma_l$ is a Lipschitz curve with constant 1 and
\begin{equation}\label{eq:5-2 1}
	|\dot{\gamma}_l(s)|\leqslant 1,\qquad a.e.\ s\in[0,l(0,\tau_0)].
\end{equation}
On the other hand, for any $0=t_0\leqslant t_1\leqslant\cdots\leqslant t_k=\tau_0$, we have
\begin{align*}
\sum_{i=1}^{k}|\gamma(t_i)-\gamma(t_{i-1})|&=\sum_{i=1}^{k}|\gamma_l(l(0,t_i))-\gamma_l(l(0,t_{i-1}))|=\sum_{i=1}^{k}|\int_{l(0,t_{i-1})}^{l(0,t_i)}\dot{\gamma}_l(s)\ ds|\\
&\leqslant \sum_{i=1}^{k}\int_{l(0,t_{i-1})}^{l(0,t_i)}|\dot{\gamma}_l(s)|\ ds=\int_{0}^{l(0,\tau_0)}|\dot{\gamma}_l(s)|\ ds.
\end{align*}
It follows that $l(0,\tau_0)\leqslant \int_{0}^{l(0,\tau_0)}|\dot{\gamma}_l(s)|\ ds$, that is, $\int_{0}^{l(0,\tau_0)}1-|\dot{\gamma}_l(s)|\ ds\leqslant 0$. Combing the inequality with \eqref{eq:5-2 1}, \eqref{app:1} follows.
\end{proof}

\begin{Lem}\label{lem:length rescaling}
Suppose $\gamma:[0,\tau_0]\to\R^n$ is a Lipschitz curve, $f:\R^n\to\R^n$ is a map.
\begin{enumerate}[\rm (1)]
	\item If for any $t\in[0,\tau_0]$ with $\tau_2(t)<\tau_0$, there holds $\lim_{r\to\tau_2(t)^+}f(\gamma(r))=f(\gamma(t))$, then we have
	\begin{align*}
		\lim_{r\to s+}f(\gamma_l(r))=f(\gamma_l(s)),\qquad \forall s\in[0,l(0,\tau_0)). 
	\end{align*}
    \item Under the assumption of \rm{(1)}, if for any $t\in[0,\tau_0]$ with $\tau_2(t)<\tau_0$, there holds 
    $$
    \lim_{r\to\tau_2(t)^+}\frac{\gamma(r)-\gamma(t)}{|\gamma(r)-\gamma(t)|}=f(\gamma(t)),
    $$
    we have
    \begin{align*}
    	\dot{\gamma}_l^+(s)=f(\gamma_l(s)),\qquad \forall s\in[0,l(0,\tau_0)).
    \end{align*}
\end{enumerate}
\end{Lem}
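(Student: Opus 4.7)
The plan is to treat (1) by a direct sequential argument using Lemma \ref{lem:tau and l}, and to reduce (2) to (1) after first identifying $\dot\gamma(u)$ with $|\dot\gamma(u)|\,f(\gamma(u))$ almost everywhere. For (1), I would fix $s\in[0,l(0,\tau_0))$ and any $t$ with $l(0,t)=s$; then $\tau_2(t)<\tau_0$, since otherwise $\gamma$ would be constant on $[t,\tau_0]$ and then $l(0,\tau_0)=l(0,t)=s$, contradicting $s<l(0,\tau_0)$. For any sequence $s_i\to s^+$ with $s_i>s$, choose $t_i$ with $l(0,t_i)=s_i$; Lemma \ref{lem:tau and l}(4) yields $t_i\to\tau_2(t)^+$, and the hypothesis of (1), together with $\gamma_l(s_i)=\gamma(t_i)$ and $\gamma_l(s)=\gamma(\tau_2(t))$, gives $f(\gamma_l(s_i))\to f(\gamma_l(s))$.

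For (2), fix $s$, take $t$ with $l(0,t)=s$, and write $\hat f=f(\gamma_l(s))=f(\gamma(t))$. The key intermediate claim is
\begin{equation*}
\dot\gamma(u)=|\dot\gamma(u)|\,f(\gamma(u))\quad\text{for a.e. }u\in[0,\tau_0].
\end{equation*}
Indeed, on the set where $\dot\gamma(u)$ exists and is nonzero, one has $\gamma(u+h)\neq\gamma(u)$ for small $h>0$, which forces $\tau_2(u)=u$; dividing $(\gamma(u+h)-\gamma(u))/h$ by its own norm and letting $h\to 0^+$ shows that $\dot\gamma(u)/|\dot\gamma(u)|$ equals the one-sided direction limit, which by hypothesis is $f(\gamma(u))$. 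On the complementary measurable set where $\dot\gamma(u)=0$, the identity is trivial. Given this identity, for $\Delta>0$ small let $r$ solve $l(0,r)=s+\Delta$, so $r\to\tau_2(t)^+$ by Lemma \ref{lem:tau and l}(4). Then
\begin{equation*}
\gamma_l(s+\Delta)-\gamma_l(s)=\int_{\tau_2(t)}^r\dot\gamma(u)\,du,\qquad \Delta=l(\tau_2(t),r)=\int_{\tau_2(t)}^r|\dot\gamma(u)|\,du,
\end{equation*}
and projecting onto $\hat f$ yields
\begin{equation*}
\frac{\bigl\langle \gamma_l(s+\Delta)-\gamma_l(s),\hat f\bigr\rangle}{\Delta}=\frac{\int_{\tau_2(t)}^r|\dot\gamma(u)|\,\langle f(\gamma(u)),\hat f\rangle\,du}{\int_{\tau_2(t)}^r|\dot\gamma(u)|\,du}\longrightarrow 1,
\end{equation*}
where the limit uses (1) applied at $t$ to guarantee $\langle f(\gamma(u)),\hat f\rangle\to 1$ as $u\to\tau_2(t)^+$. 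The direction hypothesis forces the component of $\gamma(r)-\gamma(\tau_2(t))$ orthogonal to $\hat f$ to be $o(|\gamma(r)-\gamma(\tau_2(t))|)=o(\Delta)$, so $(\gamma_l(s+\Delta)-\gamma_l(s))/\Delta\to\hat f=f(\gamma_l(s))$, which is (2).

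The main obstacle is the a.e.\ identification of $\dot\gamma$ with $|\dot\gamma|\,f(\gamma)$: one must verify that the exceptional set $\{u:\dot\gamma(u)\neq 0,\,\tau_2(u)>u\}$ is in fact empty, which holds because $\tau_2(u)>u$ means $\gamma$ is constant on a right-neighborhood of $u$ and hence $\dot\gamma(u)=0$ wherever it exists. Once this identification is in place, the conclusion of (2) reduces to an averaging argument combined with the bookkeeping provided by Lemma \ref{lem:tau and l}.
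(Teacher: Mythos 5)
Your proof is correct. Part (1) is the same sequential argument as the paper's (via Lemma \ref{lem:tau and l}(4)), and your explicit check that $\tau_2(t)<\tau_0$ is a point the paper leaves implicit. For part (2) you take a somewhat different route: the paper first proves $|\dot{\gamma}_l|=1$ a.e.\ (Lemma \ref{lem:v=1}), factors the difference quotient of $\gamma_l$ to get $\dot{\gamma}_l=f(\gamma_l)$ a.e., and then upgrades to every $s$ by writing $\gamma_l(r)-\gamma_l(s)=\int_s^r f(\gamma_l(\tau))\,d\tau$ and invoking the conclusion of (1); you instead prove the a.e.\ identity $\dot{\gamma}=|\dot{\gamma}|\,f(\gamma)$ in the original parameter (your observation that $\dot{\gamma}(u)\neq0$ forces $\tau_2(u)=u$, so that the hypothesis applies at such $u$, is exactly the right point) and then compute the right difference quotient of $\gamma_l$ at an arbitrary $s$ as the weighted average $\int_{\tau_2(t)}^r|\dot{\gamma}|\,f(\gamma)\,du\big/\int_{\tau_2(t)}^r|\dot{\gamma}|\,du$, using the right-continuity coming from the hypothesis of (1). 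The paper's route keeps everything unit-speed after reparameterization, so only the fundamental theorem of calculus for $\gamma_l$ plus right-continuity is needed; your route bypasses Lemma \ref{lem:v=1} but silently uses the classical identity $l(t_1,t_2)=\int_{t_1}^{t_2}|\dot{\gamma}(u)|\,du$ (to get $\Delta=\int_{\tau_2(t)}^r|\dot{\gamma}|$), which the paper never states; it is standard for Lipschitz curves, but you should cite or prove it, since the paper's Lemma \ref{lem:v=1} is precisely its self-contained substitute. Two minor points to tighten: the step $\langle f(\gamma(u)),\hat f\rangle\to1$ uses $|\hat f|=1$, which is not assumed of $f$ in the statement but does follow because under the hypothesis of (2) $f(\gamma(t))$ is a limit of unit vectors; and the projection/orthogonal-component bookkeeping can be dropped altogether, since the weighted-average representation already yields $\left|\frac{\gamma_l(s+\Delta)-\gamma_l(s)}{\Delta}-\hat f\right|\leqslant\sup_{u\in(\tau_2(t),r]}|f(\gamma(u))-\hat f|\to0$, which gives (2) directly.
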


\begin{proof}
Set $s\in[0,l(0,\tau_0))$. For any $r_i\to s^+$, $i\to\infty$, choose $t,t_i\in[0,\tau_0]$, $i\in\N$ such that $l(0,t)=s$, $l(0,t_i)=r_i$, $i\in\N$. Lemma \ref{lem:tau and l} \rm{(4)} implies $t_i\to\tau_2(t)^+$, $i\to\infty$. Thus,
\begin{align*}
	\lim_{i\to\infty}f(\gamma_l(r_i))=\lim_{i\to\infty}f(\gamma(t_i))=f(\gamma(t))=f(\gamma_l(s)).
\end{align*}
It follows that $\lim_{r\to s+}f(\gamma_l(r))=f(\gamma_l(s))$, and (1) holds.

Now, we turn to the proof of (2). First, for almost all $s\in[0,l(0,\tau_0))$, we have
\begin{align*}
	\dot{\gamma}_l(s)=\lim_{r\to s+}\frac{\gamma_l(r)-\gamma_l(s)}{r-s}=\lim_{r\to s+}\frac{\gamma(t_r)-\gamma(t)}{|\gamma(t_r)-\gamma(t)|}\cdot\frac{|\gamma_l(r)-\gamma_l(s)|}{r-s},
\end{align*}
where $t_r$ and $t$ satisfy $l(0,t_r)=r$ and $l(0,t)=s$. Lemma \ref{lem:v=1} implies
\begin{align*}
	\lim_{r\to s+}\frac{|\gamma_l(r)-\gamma_l(s)|}{r-s}=|\dot{\gamma}_l(s)|=1.
\end{align*}
It follows from Lemma \ref{lem:tau and l} \rm{(4)} that
\begin{align*}
	\lim_{r\to s+}\frac{\gamma(t_r)-\gamma(t)}{|\gamma(t_r)-\gamma(t)|}=\lim_{r\to \tau_2(t)^+}\frac{\gamma(r)-\gamma(t)}{|\gamma(r)-\gamma(t)|}=f(\gamma(t))=f(\gamma_l(s)).
\end{align*}
Therefore, $\dot{\gamma}_l(s)=f(\gamma_l(s))$. Now, for any $s\in[0,l(0,\tau_0))$, we have
\begin{align*}
    \dot{\gamma}^+_l(s)=\lim_{r\to s+}\frac{\gamma_l(r)-\gamma_l(s)}{r-s}=\lim_{r\to s+}\frac{1}{r-s}\int_{s}^{r}\dot{\gamma}_l(\tau)\ d\tau=\lim_{r\to s+}\frac{1}{r-s}\int_{s}^{r}f(\gamma_l(\tau))\ d\tau=f(\gamma_l(s)),
\end{align*}
where the last equality is from \rm{(1)}.
\end{proof}

We say a function $g:\R\to\R$ is \emph{$b$-increasing} with $b>0$ if
\begin{align*}
	g(t_2) \geqslant g(t_1)+b(t_{2}-t_{1}),\qquad \forall t_1 \leqslant t_2.
\end{align*}

\begin{Lem}\label{lem:ode for right d}
Suppose $a>0$, $0<b_1\leqslant b_2<+\infty$, $f:[0,a]\to[b_1,b_2]$ is right-continuous. Then there exists a Lipschitz function $x:[0,\frac{a}{b_2}]\to\R$ such that it is a solution of
\begin{align*}
	\begin{cases}
		\dot{x}^+(t)=f(x(t)),\qquad t\in[0,\frac{a}{b_2})\\
		x(0)=0
	\end{cases}
\end{align*}
\end{Lem}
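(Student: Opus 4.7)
The plan is to construct $x$ explicitly as the inverse of a time-change function, and then exploit the right-continuity of $f$ to upgrade almost-everywhere differentiability to a pointwise right-derivative statement. Because $f$ is right-continuous and bounded, it is Borel measurable, and since $f\geqslant b_1>0$, the function $1/f$ is bounded by $1/b_1$ and measurable on $[0,a]$.

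First, I would define
\begin{align*}
T(y)=\int_0^y\frac{ds}{f(s)},\qquad y\in[0,a].
\end{align*}
From $1/b_2\leqslant 1/f(s)\leqslant 1/b_1$ it follows that $T$ is Lipschitz with constant $1/b_1$, strictly increasing with
\begin{align*}
\frac{y_2-y_1}{b_2}\leqslant T(y_2)-T(y_1)\leqslant \frac{y_2-y_1}{b_1},\qquad 0\leqslant y_1\leqslant y_2\leqslant a,
\end{align*}
and $T(a)\geqslant a/b_2$. Thus $T:[0,a]\to[0,T(a)]$ is a homeomorphism, and I define $x:[0,a/b_2]\to[0,a]$ as the restriction of $T^{-1}$ to $[0,a/b_2]$. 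The inequality above applied to $y_1=x(t_1),\,y_2=x(t_2)$ immediately gives $|x(t_2)-x(t_1)|\leqslant b_2|t_2-t_1|$, so $x$ is Lipschitz with constant $b_2$, strictly increasing, and $x(0)=0$.

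Next, I would verify $\dot{x}^+(t)=f(x(t))$ for every $t\in[0,a/b_2)$. Fix such $t$, let $t'\in(t,a/b_2]$, and set $y=x(t)$, $y'=x(t')$. Since $x$ is strictly increasing, $y'>y$, and the change-of-variable identity gives
\begin{align*}
t'-t=T(y')-T(y)=\int_{y}^{y'}\frac{ds}{f(s)}.
\end{align*}
Given $\varepsilon\in(0,f(y))$, by right-continuity of $f$ at $y$ there exists $\delta>0$ such that $|f(s)-f(y)|<\varepsilon$ for $s\in[y,y+\delta]\cap[0,a]$. Since $x$ is continuous and increasing at $t$, there exists $\eta>0$ with $y'=x(t')\in[y,y+\delta]$ whenever $t'\in(t,t+\eta)$. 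For such $t'$,
\begin{align*}
\frac{y'-y}{f(y)+\varepsilon}\leqslant t'-t\leqslant\frac{y'-y}{f(y)-\varepsilon},
\end{align*}
hence
\begin{align*}
f(y)-\varepsilon\leqslant\frac{x(t')-x(t)}{t'-t}\leqslant f(y)+\varepsilon.
\end{align*}
Letting $t'\to t^+$ and then $\varepsilon\to 0$, I obtain $\dot{x}^+(t)=f(y)=f(x(t))$, as desired.

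The only delicate point is ensuring that the right-derivative exists \emph{at every} $t\in[0,a/b_2)$ rather than merely almost everywhere; this is precisely where right-continuity of $f$ (together with the monotonicity of $x$, which guarantees that $x(t')$ approaches $x(t)$ from the right) is essential. Everything else amounts to standard estimates for the inverse of a strictly monotone Lipschitz map, so I expect no further obstacle.
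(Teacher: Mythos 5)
Your proof is correct, and it takes a genuinely different and more explicit route than the paper. The paper constructs the solution by an Euler-type discretization: it builds piecewise-linear approximations $x_m$ with step $\frac{1}{m}\frac{a}{b_2}$, observes they are uniformly $b_2$-Lipschitz and $b_1$-increasing, extracts a uniformly convergent subsequence by Arzel\`a--Ascoli, and then verifies the right-derivative condition for the limit by an $\varepsilon$--$\delta$ argument combining right-continuity of $f$ with the $b_1$-increasing property (which forces the approximations to approach $x(t)$ from the right). Your proof instead writes the solution in closed form as the inverse of the time-change $T(y)=\int_0^y ds/f(s)$, using the bounds $1/b_2\leqslant 1/f\leqslant 1/b_1$ to get bi-Lipschitz control and $T(a)\geqslant a/b_2$ to cover the required domain, and then verifies $\dot{x}^+(t)=f(x(t))$ directly from the integral identity $t'-t=\int_{x(t)}^{x(t')}ds/f(s)$ together with right-continuity of $f$ at $x(t)$ (exactly the point where strict monotonicity of $x$ is needed, so that $x(t')\to x(t)^+$). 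Your argument is shorter and avoids both the compactness step and the subsequence extraction; it also yields uniqueness of the solution for free, since $T$ is a bijection. The paper's approach is closer to the standard existence machinery for ODEs and would generalize more readily to non-autonomous or vector-valued settings where the time-change trick is unavailable, but in this scalar, autonomous, strictly positive setting your method is cleaner. One minor remark: the measurability of $1/f$ that you invoke does hold (right-continuous functions are Borel, e.g.\ as pointwise limits of step functions sampled at right endpoints), so the integral defining $T$ is legitimate.
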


\begin{proof}
For $m\in\N$, we define $x_m:[0,\frac{a}{b_2}]\to\R$ as follows:
\begin{align*}
	x_m(t)=
\begin{cases}
	f(0)t,\qquad t\in[0,\frac{1}{m}\frac{a}{b_2}]\\
	x_m(\frac{i-1}{m}\frac{a}{b_2})+f(x_m(\frac{i-1}{m}\frac{a}{b_2}))(t-\frac{i-1}{m}\frac{a}{b_2}),\qquad t\in[\frac{i-1}{m}\frac{a}{b_2},\frac{i}{m}\frac{a}{b_2}],2\leqslant i\leqslant m.
\end{cases}
\end{align*}
It is easy to verify that $x_m$ are all $b_1$-increasing, $b_2$-Lipschitz functions on $[0,\frac{a}{b_2}]$, and $x_m(0)=0$. By Alzela-Ascoli theorem, there exists a subsequence $\{x_{m_k}\}$ and an $x:[0,\frac{a}{b_2}]\to\R$ such that $x_{m_k}$ converges uniformly to $x$ on $[0,\frac{a}{b_2}]$. $x$ is also $b_1$-increasing, $b_2$-Lipschitz on $[0,\frac{a}{b_2}]$, and $x(0)=0$. So we have
\begin{align*}
	0\leqslant x(t)<a,\qquad \forall t\in[0,\frac{a}{b_2}).
\end{align*}
Fix any $t\in[0,\frac{a}{b_2})$. For any $\varepsilon>0$, by the right-continuity of $f$, there exists $\delta>0$ such that
\begin{align*}
	|f(y)-f(x(t))|<\varepsilon,\qquad \forall x(t)\leqslant y\leqslant x(t)+\delta.
\end{align*}
Since $x_{m_k}$ converges uniformly to $x$, there exists $n_0\in\N$ such that $\|x_{m_k}-x\|<\frac{1}{2}\delta$ for all $k\geqslant n_0$. Thus, we have
\begin{align*}
	x_{m_k}(s)&<x(s)+\frac{1}{2}\delta\leqslant x(t)+(s-t)b_2+\frac{1}{2}\delta \leqslant x(t)+\frac{\delta}{2b_2}b_2+\frac{1}{2}\delta\\
	&=x(t)+\delta,\qquad \forall k\geqslant n_0,t\leqslant s\leqslant t+\frac{\delta}{2b_2}.
\end{align*}
Furthermore, choosing any $t<\tilde{t}<s\leqslant t+\frac{\delta}{2b_2}$, by the uniform convergence of $x_{m_k}$, there exists $n_1\in\N$ such that $\|x_{m_k}-x\|<\frac{b_1}{2}(\tilde{t}-t)$ for all $k\geqslant n_1$. Therefore, when $k\geqslant\max\{n_0,n_1\}$, $\frac{t+\tilde{t}}{2}\leqslant\tau\leqslant s$, we have
\begin{align*}
	x_{m_k}(\tau)\geqslant x_{m_k}\left(\frac{t+\tilde{t}}{2}\right)>x\left(\frac{t+\tilde{t}}{2}\right)-\frac{b_1}{2}(\tilde{t}-t)\geqslant x(t)+\frac{b_1}{2}(\tilde{t}-t)-\frac{b_1}{2}(\tilde{t}-t)=x(t),
\end{align*}
and $x_{m_k}(\tau)\leqslant x(t)+\delta$. This implies $|f(x_{m_k}(\tau))-f(x(t))|<\varepsilon$. Combing this with the definition of $x_m$, it is not hard to obtain that
\begin{align*}
	\frac{x(s)-x(\tilde{t})}{s-\tilde{t}}=\lim_{k\to\infty}\frac{x_{m_k}(s)-x_{m_k}(\tilde{t})}{s-\tilde{t}}\in(f(x(t))-\varepsilon,f(x(t))+\varepsilon).
\end{align*}
Since $\tilde{t}$ is arbitrary, it follows that
\begin{align*}
	\frac{x(s)-x(t)}{s-t}=\lim_{\tilde{t}\to t^+}\frac{x(s)-x(\tilde{t})}{s-\tilde{t}}\in(f(x(t))-\varepsilon,f(x(t))+\varepsilon).
\end{align*}
Finally, we have
\begin{align*}
	\dot{x}^+(t)=\lim_{s\to t^+}\frac{x(s)-x(t)}{s-t}=f(x(t)).
\end{align*}
This completes the proof.
\end{proof}

\bibliographystyle{plain}
\bibliography{mybib}

\begin{thebibliography}{10}

\bibitem{Albano_Cannarsa2002}
Paolo Albano and Piermarco Cannarsa.
\newblock Propagation of singularities for solutions of nonlinear first order
  partial differential equations.
\newblock {\em Arch. Ration. Mech. Anal.}, 162(1):1--23, 2002.

\bibitem{ACNS2013}
Paolo Albano, Piermarco Cannarsa, Khai~Tien Nguyen, and Carlo Sinestrari.
\newblock Singular gradient flow of the distance function and homotopy
  equivalence.
\newblock {\em Math. Ann.}, 356(1):23--43, 2013.

\bibitem{Cannarsa_Cheng3}
Piermarco Cannarsa and Wei Cheng.
\newblock Generalized characteristics and {L}ax-{O}leinik operators: global
  theory.
\newblock {\em Calc. Var. Partial Differential Equations}, 56(5):Art. 125, 31,
  2017.

\bibitem{Cannarsa_Cheng2021b}
Piermarco Cannarsa and Wei Cheng.
\newblock Local singular characteristics on {$\Bbb R^2$}.
\newblock {\em Boll. Unione Mat. Ital.}, 14(3):483--504, 2021.

\bibitem{Cannarsa_Cheng2021a}
Piermarco Cannarsa and Wei Cheng.
\newblock Singularities of {S}olutions of {H}amilton--{J}acobi {E}quations.
\newblock {\em Milan J. Math.}, 89(1):187--215, 2021.

\bibitem{CCJWY2020}
Piermarco Cannarsa, Wei Cheng, Liang Jin, Kaizhi Wang, and Jun Yan.
\newblock Herglotz' variational principle and {L}ax-{O}leinik evolution.
\newblock {\em J. Math. Pures Appl. (9)}, 141:99--136, 2020.

\bibitem{CCWY2019}
Piermarco Cannarsa, Wei Cheng, Kaizhi Wang, and Jun Yan.
\newblock Herglotz' generalized variational principle and contact type
  {H}amilton-{J}acobi equations.
\newblock In {\em Trends in control theory and partial differential equations},
  volume~32 of {\em Springer INdAM Ser.}, pages 39--67. Springer, Cham, 2019.

\bibitem{Cannarsa_Sinestrari_book}
Piermarco Cannarsa and Carlo Sinestrari.
\newblock {\em Semiconcave functions, {H}amilton-{J}acobi equations, and
  optimal control}, volume~58 of {\em Progress in Nonlinear Differential
  Equations and their Applications}.
\newblock Birkh{\"a}user Boston, Inc., Boston, MA, 2004.

\bibitem{Cannarsa_Yu2009}
Piermarco Cannarsa and Yifeng Yu.
\newblock Singular dynamics for semiconcave functions.
\newblock {\em J. Eur. Math. Soc. (JEMS)}, 11(5):999--1024, 2009.

\bibitem{Cheng_Hong2021}
Wei Cheng and Jiahui Hong.
\newblock Local strict singular characteristics: {C}auchy problem with smooth
  initial data.
\newblock preprint, arXiv:2103.06217, 2021.

\bibitem{Dafermos1977}
Constantine~M. Dafermos.
\newblock Generalized characteristics and the structure of solutions of
  hyperbolic conservation laws.
\newblock {\em Indiana Univ. Math. J.}, 26(6):1097--1119, 1977.

\bibitem{Khanin_Sobolevski2016}
Konstantin Khanin and Andrei Sobolevski.
\newblock On dynamics of {L}agrangian trajectories for {H}amilton-{J}acobi
  equations.
\newblock {\em Arch. Ration. Mech. Anal.}, 219(2):861--885, 2016.

\bibitem{Stromberg2013}
Thomas Str{\"o}mberg.
\newblock Propagation of singularities along broken characteristics.
\newblock {\em Nonlinear Anal.}, 85:93--109, 2013.

\bibitem{Stromberg_Ahmadzadeh2014}
Thomas Str{\"o}mberg and Farzaneh Ahmadzadeh.
\newblock Excess action and broken characteristics for {H}amilton-{J}acobi
  equations.
\newblock {\em Nonlinear Anal.}, 110:113--129, 2014.

\bibitem{Wang_Wang_Yan2019_1}
Kaizhi Wang, Lin Wang, and Jun Yan.
\newblock Variational principle for contact {H}amiltonian systems and its
  applications.
\newblock {\em J. Math. Pures Appl. (9)}, 123:167--200, 2019.

\end{thebibliography}

\end{document}